\newtheorem{theorem}{Theorem}[section]
\newtheorem{lemma}[theorem]{Lemma}
\newtheorem{proposition}[theorem]{Proposition}
\newtheorem{notation}[theorem]{Notation}
\newtheorem{remark}[theorem]{Remark}
\newtheorem{conjecture}[theorem]{Conjecture}
\newcommand{\Sym}{\mathop{\mathrm{Sym}}}
\begin{document}

\title[Cliques in derangement graph]{Kronecker classes and cliques in  derangement graphs}

\author[M.~Cazzola]{Marina Cazzola}
\address{Dipartimento di Matematica e Applicazioni, University of Milano-Bicocca, Via Cozzi 55, 20125 Milano, Italy} 
\email{marina.cazzola@unimib.it}
\author[L.~Gogniat]{Louis Gogniat}
\address{Institute of Mathematics, EPFL, Station 8
CH-1015 Lausanne, Switzerland} 
\email{louis.gogniat@epfl.ch}

\author[P.~Spiga]{Pablo Spiga}
\address{Dipartimento di Matematica e Applicazioni, University of Milano-Bicocca, Via Cozzi 55, 20125 Milano, Italy} 
\email{pablo.spiga@unimib.it}
\begin{abstract}
Given a permutation group $G$, the derangement graph of $G$ is defined with vertex set $G$, where two elements $x$ and $y$ are adjacent if and only if $xy^{-1}$ is a derangement. We establish that, if $G$ is transitive with degree exceeding 30, then the derangement graph of $G$ contains a complete subgraph with four vertices.

As a consequence, if $G$ is a normal subgroup of $A$ such that $|A : G| = 3$, and if $U$ is a subgroup of $G$ satisfying $G = \bigcup_{a \in A} U^a$, then $|G : U| \leq 10$. This result provides support for a conjecture by Neumann and Praeger concerning Kronecker classes.
\keywords{derangement graph, independent sets, Erd\H{o}s-Ko-Rado Theorem, Symmetric Group, multipartite graphs}
\end{abstract}

\subjclass[2010]{Primary 05C35; Secondary 05C69, 20B05}

\maketitle

\section{Introduction}\label{intro}
The Erd\H{o}s-Ko-Rado theorem~\cite{erdos1961intersection} is a landmark result in extremal combinatorics. It states that for integers $n$ and $k$ with $1 \leq 2k < n$, any family $\mathcal{F}$ of $k$-subsets of $\{1, \ldots, n\}$ in which every pair of sets intersects has cardinality at most $\binom{n-1}{k-1}$. Moreover, this upper bound is achieved if and only if there exists an element $x \in \{1, \ldots, n\}$ contained in every member of $\mathcal{F}$.

Numerous extensions of the Erd\H{o}s-Ko-Rado theorem have been explored across various combinatorial structures. This paper focuses on analogues within permutation groups. For a finite permutation group $G$ acting on a set $\Omega$, a subset $\mathcal{F} \subseteq G$ is \textit{\textbf{intersecting}} if, for every $g, h \in \mathcal{F}$, the permutation $gh^{-1}$ fixes at least one point in $\Omega$. This condition generalizes the definition of intersecting families in the classical theorem, as $gh^{-1}$ fixing a point corresponds to agreement in at least one coordinate when $g$ and $h$ are expressed as tuples.

For each $\omega \in \Omega$, the point stabilizer $G_\omega$ is an intersecting subset of $G$. More broadly, any coset of a point stabilizer is also intersecting. The analogue of the Erd\H{o}s-Ko-Rado theorem for $G = \mathrm{Sym}(\Omega)$ was independently proven by Cameron-Ku~\cite{6} and Larose-Malvenuto~\cite{10}, demonstrating that intersecting sets in $\mathrm{Sym}(\Omega)$ have cardinality at most $(|\Omega| - 1)!$, with equality achieved only by cosets of point stabilizers. However, for general permutation groups, these bounds do not hold universally: point stabilizers are not always of maximum size among intersecting sets,\footnote{For example, in the action of $\mathrm{Alt}(5)$ on the 10 subsets of size 2 from $\{1, 2, 3, 4, 5\}$, the intersecting set $\mathrm{Alt}(4)$ has size 12, whereas the point stabilizer has size 6.} and even when they are, not all maximal intersecting sets are cosets of point stabilizers.\footnote{For instance, in $\mathrm{PGL}_d(q)$ acting $2$-transitively on the points of the projective space $\mathrm{PG}_{d-1}(q)$, the largest intersecting sets are either cosets of point stabilizers or of hyperplane stabilizers, as shown in~\cite{spiga}.} These challenges make the study of maximal intersecting sets in arbitrary permutation groups both intriguing and complex. 

To quantify the deviation of a group $G$ from satisfying an Erd\H{o}s-Ko-Rado-like property, Li, Song, and Pantagi introduced the concept of \textit{\textbf{intersection density}}. For $\omega \in \Omega$, let $G_\omega$ denote the largest point stabilizer in $G$.\footnote{In transitive groups, all point stabilizers have the same cardinality.} The \textit{\textbf{intersection density}} of an intersecting subset $\mathcal{F} \subseteq G$ is defined as 
$$\rho(\mathcal{F}) = \frac{|\mathcal{F}|}{|G_\omega|},$$ 
and the \textit{\textbf{intersection density}} of $G$ is 
$$\rho(G) = \max \{\rho(\mathcal{F}) \mid \mathcal{F} \subseteq G, \mathcal{F} \text{ is intersecting} \}.$$ 
This invariant, introduced in~\cite{li2020ekr}, measures the extent to which $G$ satisfies an Erd\H{o}s-Ko-Rado-like theorem.

Let $\mathcal{D}$ denote the set of all \textit{\textbf{derangements}} of $G$, where a derangement is a permutation without fixed points. The \textit{\textbf{derangement graph}} $\Gamma_G$ of $G$ has vertex set $G$, with edges connecting $x, y \in G$ if $xy^{-1} \in \mathcal{D}$. Equivalently, $\Gamma_G$ is the Cayley graph of $G$ with connection set $\mathcal{D}$. In this framework, intersecting families in $G$ correspond to independent sets (or cocliques) in $\Gamma_G$. Let $\omega(\Gamma_G)$ and $\alpha(\Gamma_G)$ denote the sizes of the largest cliques and independent sets, respectively. The clique-coclique bound~\cite[Theorem~2.1.1]{GodsilMeagher} 
\begin{equation}\label{clique-coclique} \alpha(\Gamma_G)\omega(\Gamma_G) \leq |G| 
\end{equation} 
relates these parameters and provides insights into intersection density:
\begin{equation}\label{clique-coclique2} \rho(G) \leq \frac{|\Omega|}{\omega(\Gamma_G)}.
\end{equation}

Theorem~1.5 of~\cite{KRS} establishes that for transitive $G$ with $|\Omega| \geq 3$, $\Gamma_G$ contains a triangle, implying $\rho(G) \leq |\Omega|/3$. Question~6.1 of~\cite{KRS} asks whether a function $f : \mathbb{N} \to \mathbb{N}$ exists such that if $G$ is transitive of degree $n$ and $\Gamma_G$ lacks a $k$-clique, then $n \leq f(k)$.\footnote{A related question on weak normal coverings of groups appears in~\cite{BSW}.} When $k = 2$, Jordan's theorem gives $n \leq 1$, and for $k = 3$, Theorem~1.5 of~\cite{KRS} gives $n \leq 2$. This paper contributes further evidence toward this question.

\begin{theorem}\label{thrm:main}
Let $G$ be a finite transitive permutation group on $\Omega$. Then either the derangement graph of $G$ in its action on $\Omega$ has a clique of size at least $4$, or one of the following holds
\begin{enumerate}
\item$\label{thrm:maineq1}$ $|\Omega|\le 3$ and $\mathrm{Alt}(\Omega)\le G\le\mathrm{Sym}(\Omega)$,
\item$\label{thrm:maineq2}$ $|\Omega|=6$ and $G\cong\mathrm{Alt}(4)$,
\item$\label{thrm:maineq3}$ $|\Omega|=18$, $|G|=324$ and $G$ is the group $(18,142)$ in the library of transitive groups in \texttt{magma},
\item$\label{thrm:maineq4}$ $|\Omega|=30$, and $G$ is the group $(30,126)$ or $(30,233)$ in the library of transitive groups in \texttt{magma}.
\end{enumerate}

The derangement graph of the groups in parts~\eqref{thrm:maineq1}--~\eqref{thrm:maineq4} do not admit a clique of cardinality $4$.
\end{theorem}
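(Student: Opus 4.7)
The plan is to combine a simple structural observation with an induction on the degree and a finite computational check. The central elementary fact is that if $d\in G$ has all its orbits on $\Omega$ of length at least $4$, then $d^i$ is a derangement for $i=1,2,3$ (since no orbit length $c\ge 4$ divides $1$, $2$, or $3$), so $\{1,d,d^2,d^3\}$ forms a clique in $\Gamma_G$. The bulk of the argument is then devoted to producing such an element $d$ in every transitive group of degree exceeding $30$ that is not on the exceptional list.

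As a first reduction I would pass to quotient actions. If $\mathcal{B}$ is a nontrivial system of blocks for $G$ on $\Omega$ and $\bar x_1,\ldots,\bar x_4\in G^{\mathcal{B}}$ is a $4$-clique of $\Gamma_{G^{\mathcal{B}}}$, then any preimages $x_i\in G$ form a $4$-clique of $\Gamma_G$: a product $x_ix_j^{-1}$ whose image on $\mathcal{B}$ fixes no block cannot fix any point of $\Omega$, since a fixed point would sit inside a fixed block. An induction on $|\Omega|$ therefore reduces the problem to the case where either $G$ is primitive, or every quotient $G^{\mathcal{B}}$ is one of the groups in (1)--(4); in the latter situation only finitely many imprimitive candidates arise and each can be examined individually.

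For the primitive case and degree sufficiently large, the existence of a suitable $d$ would follow from the O'Nan--Scott classification combined with standard existence results on fixed-point-free elements of prime order and on cycle types in almost simple groups: in the almost simple case one locates a prime $p\ge 5$ and an element of order $p$ acting without fixed points (so with all orbits of length $p$); in the affine case one uses a non-identity element of the abelian socle of order at least $4$ (acting regularly on $\Omega$) or an eigenvalue-free semisimple element of the point stabilizer; and the remaining O'Nan--Scott types reduce to these by restricting to a component. Primitive groups of small degree, together with the finitely many imprimitive candidates singled out above, are handled by direct computation with the \texttt{magma} library of transitive groups: for each $G$ one enumerates $\mathcal{D}$ and runs a $4$-clique search in $\Gamma_G$. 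This produces precisely the exceptions (1)--(4) and confirms the final sentence of the theorem.

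The main obstacle I expect lies in the primitive groups of degree in the transition range $n\approx 31$--$100$, where generic O'Nan--Scott arguments are delicate: one must rule out in each family the possibility that every element of $G$ has at least one orbit of length $\le 3$ on $\Omega$, and when that fails, supplement the cyclic construction with a direct search for four structured elements whose pairwise quotients are all derangements. The threshold $30$ in the statement presumably reflects the largest degree at which this structural argument still leaves genuine exceptions, so that the computational check must be exhaustive up to that bound.
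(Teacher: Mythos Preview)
Your reduction step contains a genuine gap. You claim that once every quotient $G^{\mathcal{B}}$ lies in the list (1)--(4), ``only finitely many imprimitive candidates arise and each can be examined individually.'' This is not true: knowing that all quotient actions have degree at most $30$ does not bound $|\Omega|$. Concretely, if $G$ has a single nontrivial block system with three blocks of size $n$ (so the unique quotient is $\mathrm{Alt}(3)$ or $\mathrm{Sym}(3)$, on the list), then $|\Omega|=3n$ is unbounded. Producing a $4$-clique in such a $G$ is precisely the heart of the problem, and your proposal says nothing about it. The paper's entire Sections~3--6 are devoted to this case: one shows that the kernel $G_{(\Sigma_1)}$ contains no derangement, then iterates through a chain of block systems $\Sigma_1\supset\Sigma_2\supset\Sigma_3$, at each stage invoking the structural Propositions~\ref{prop:technique1} and~\ref{prop:technique2} (which analyse when the pointwise stabilizer of a block system contains a derangement, ultimately via Saxl's theorem and a hypergraph-colouring argument) to force the next quotient onto a short list. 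No finiteness comes for free.

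Your treatment of the primitive case is also sketchy, though less seriously so. The cyclic construction needs an element all of whose orbits have length at least $4$; in affine groups over $\mathbb{F}_2$ or $\mathbb{F}_3$ the socle provides no such element, and an ``eigenvalue-free semisimple element of the point stabilizer'' fixes the origin. One can repair this with ad hoc work, but the paper simply quotes \cite[Theorem~1.2]{FPS} for primitive groups. In short, your outline correctly identifies the quotient-lifting observation (Lemma~\ref{l:prel1}) but misses that the substantive content of the theorem lies in the imprimitive case, where the induction hypothesis gives you only a degree-$3$ quotient and you must extract a derangement from the block kernel by structural means.
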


Various open conjectures regarding Kronecker classes of field extensions can be reformulated as questions about covering groups by conjugates of subgroups. The interplay between these two subjects is detailed in~\cite{Klingen98} and summarized in~\cite[Section1.7]{BSW}; we also refer the reader to the papers~\cite{GMP,Jehne77,Klingen78,Praeger88,Praeger94}. For applications to Kronecker classes of field extensions, the most important open problem is the following conjecture of Neumann and Praeger~\cite[Conjecture~4.3]{Praeger94} (see also the Kourovka Notebook \cite[11.71]{notebook}).

\begin{conjecture}[{Neumann-Praeger}]\label{conj}{\rm
Let $A$ be a finite group with a normal  subgroup $H$. A subgroup $U$ of $H$ is 
called an $A$-covering subgroup of $H$ if $H=\bigcup_{a\in A}U^a$. 

Is there a function $f : \mathbb{N} \to \mathbb{N}$
such that whenever $U < H \le A$, where $A$ is a finite group, $H$ is a normal subgroup
of $A$ of index $n$, and $U$ is an $A$-covering subgroup of $H$, the index $|H : U | \le f (n)$?
}
\end{conjecture}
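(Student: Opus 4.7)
My plan is to recast the covering condition in the conjecture as a coclique condition in a derangement graph, and then combine the clique--coclique bound with Theorem~\ref{thrm:main}. Fix $U < H \trianglelefteq A$ with $|A:H| = n$, and consider the natural transitive action of $A$ on the coset space $\Omega = A/U$, which has degree $|A:U| = n|H:U|$. A direct coset computation shows that $h \in H$ fixes the coset $Ua$ precisely when $h \in U^a$, so the covering hypothesis $H = \bigcup_{a \in A} U^a$ is equivalent to saying that no element of $H$ acts as a derangement on $\Omega$. Since $\{h_1 h_2^{-1} : h_1, h_2 \in H\} = H$, this in turn is equivalent to $H$ being a coclique in the derangement graph $\Gamma_A$ of the $A$-action on $\Omega$.

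Next I would apply the clique--coclique bound~\eqref{clique-coclique}: as $H$ is a coclique, $|H| \le \alpha(\Gamma_A) \le |A|/\omega(\Gamma_A)$, and therefore $\omega(\Gamma_A) \le |A:H| = n$. This reduces Conjecture~\ref{conj} to Question~6.1 of~\cite{KRS}: given a function $g : \mathbb{N} \to \mathbb{N}$ such that every transitive permutation group whose derangement graph has clique number less than $k$ has degree at most $g(k)$, one immediately obtains $|\Omega| \le g(n+1)$ and hence $|H:U| = |\Omega|/n \le g(n+1)/n$, providing the required function $f$.

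For $n = 3$, Theorem~\ref{thrm:main} supplies exactly this ingredient with $g(4) = 30$, apart from the four sporadic families appearing in parts~\eqref{thrm:maineq1}--\eqref{thrm:maineq4}. In the generic case $|\Omega| \le 30$ gives $|H:U| \le 10$, and the sporadic groups can be inspected individually to confirm the same bound (the tightest being $|\Omega| = 30$, which yields exactly $|H:U| = 10$). Thus this paper settles Conjecture~\ref{conj} for $n = 3$ with $f(3) = 10$. The main obstacle to the full conjecture is the extension of Theorem~\ref{thrm:main} from $k = 4$ to arbitrary $k$: for each $k$ one would have to produce a degree bound beyond which the derangement graph of a transitive group is forced to contain a $k$-clique. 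Pushing the present methods to larger $k$ appears to require substantially new ideas, especially in controlling primitive groups of small minimal degree and wreath-product constructions; this is where I expect the central technical difficulty of a complete proof to lie.
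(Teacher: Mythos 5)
Your proposal is correct and follows essentially the same route as the paper's own proof: the same coset action of $A$ on $\Omega=A/U$, the same translation of the covering hypothesis into the statement that $H$ contains no derangement, the resulting bound $\omega(\Gamma_A)\le |A:H|=3$, and then an appeal to Theorem~\ref{thrm:main} to get $|\Omega|\le 30$ and $|H:U|\le 10$. The only cosmetic difference is that you obtain the clique bound via the clique--coclique inequality~\eqref{clique-coclique} applied to the coclique $H$, whereas the paper notes directly that two distinct elements of a clique must lie in distinct $H$-cosets; the two arguments are interchangeable.
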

This conjecture is obvious when $n=1$, and it is one of the main results in~\cite{Saxl} when $n=2$. Theorem~\ref{thrm:main} proves Conjecture~\ref{conj} when $n=3$.\footnote{In fact, in~\cite[page~934]{FPS}, the authors have shown that, if Question~6.1 in~\cite{KRS} has a positive answer, then the Praeger-Neumann conjecture~\cite[11.71]{notebook} holds true. It is unclear if the converse is also true.} 
\begin{proof}[Proof of Conjecture~$\ref{conj}$ when $n=3$] Let $\Omega$ be the set of right cosets of $U$ in $A$ and let us consider the action of $A$ on $\Omega$. As $H=\bigcup_{a\in A}U^a$ and as $U$ is the stabilizer of a point in $\Omega$, we deduce that  $H$ contains no derangement.

Let $C$ be a clique for the derangement graph of $A$ in its action on $\Omega$. Since $xy^{-1}$ is a derangement for any two distinct elements  $x$ and $y$ of $C$, it follows that $x$ and $y$ belong to distinct $H$-cosets in $A$. Therefore, $|C|\le |A:H|=3$. Thus, by Theorem~\ref{thrm:main}, we deduce $|A:H|\le 30$ and hence $|H:U|\le 10$.
\end{proof}

\section{Preliminaries}\label{sec:preliminaries}

\begin{lemma}\label{centralizes}
Let $A$ and $B$ be transitive subgroups of $\mathrm{Sym}(\Omega)$. If $A$ and $B$ commute, then $B={\bf C}_{\mathrm{Sym}(\Omega)}(A)$ and $A={\bf C}_{\mathrm{Sym}(\Omega)}(B)$.
\end{lemma}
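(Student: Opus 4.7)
The plan is to use two standard facts about subgroups of $\mathrm{Sym}(\Omega)$: first, that any transitive subgroup has order at least $|\Omega|$, and second, that the centralizer of a transitive subgroup is \emph{semi-regular} (i.e.\ only the identity has a fixed point), and therefore has order at most $|\Omega|$. Once both inequalities are in place, a squeeze argument finishes the job.

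Concretely, I would first set $C_A = \mathbf{C}_{\mathrm{Sym}(\Omega)}(A)$ and verify the semi-regularity claim: take $c \in C_A$ with $\omega^c = \omega$ for some $\omega \in \Omega$; for an arbitrary $\omega' \in \Omega$, transitivity of $A$ yields $a \in A$ with $\omega' = \omega^a$, and then
\[
(\omega')^c = (\omega^a)^c = (\omega^c)^a = \omega^a = \omega',
\]
since $c$ commutes with $a$. Hence $c$ fixes every point of $\Omega$, so $c = 1$. This means every orbit of $C_A$ on $\Omega$ has size exactly $|C_A|$, so $|C_A|$ divides $|\Omega|$; in particular $|C_A| \le |\Omega|$.

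Next, the hypothesis that $A$ and $B$ commute gives $B \le C_A$. Because $B$ is transitive on $\Omega$, a point stabilizer $B_\omega$ satisfies $|B : B_\omega| = |\Omega|$, and in particular $|B| \ge |\Omega|$. Combining with the previous paragraph,
\[
|\Omega| \le |B| \le |C_A| \le |\Omega|,
\]
so equality holds throughout and $B = C_A = \mathbf{C}_{\mathrm{Sym}(\Omega)}(A)$. The second equality $A = \mathbf{C}_{\mathrm{Sym}(\Omega)}(B)$ follows by the same argument with the roles of $A$ and $B$ interchanged, since the hypothesis and conclusion are symmetric.

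Since both ingredients are elementary, there is no real obstacle here; the only subtlety is being careful that transitivity of $A$ (not just of $B$) is what forces the centralizer $C_A$ to be semi-regular, and transitivity of $B$ is what forces $|B|\ge|\Omega|$. Both halves of the hypothesis are therefore used exactly once per equality.
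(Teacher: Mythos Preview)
Your proof is correct and is precisely the standard argument. The paper itself does not give a proof at all: it simply cites \cite[page~27,~1.5]{Peter} (Cameron's \textit{Permutation Groups}), and the argument recorded there is essentially the one you have written out --- semi-regularity of the centralizer of a transitive group, followed by the order squeeze $|\Omega|\le |B|\le |\mathbf{C}_{\mathrm{Sym}(\Omega)}(A)|\le |\Omega|$.
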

\begin{proof}
See~\cite[page~27,~1.5]{Peter}.
\end{proof}

\begin{notation}{\rm  
For future reference, we provide a summary of key definitions, some of which were previously introduced in the introductory section.

Let $G$ be an abstract group and let $X$ and $Y$ be subgroups of $G$. We denote by $$[X,Y]:=\langle [x,y]\mid x\in X,y\in Y\rangle$$
the subgroup of $G$ generated by the commutators $[x,y]=x^{-1}y^{-1}xy$, with $x\in X$ and $y\in Y$.

Let $\Sigma$ be a system of imprimitivity for the action of $G$ on $\Omega$ and let $\Delta\in \Sigma$. 
We let $$G_{\{\Delta\}}=\{g\in G\mid \Delta^g=\Delta\}$$ be the setwise stabilizer of $\Delta$ in $G$ and we let $$G_{(\Delta)}=\{g\in G\mid \delta^g=\delta, \,\forall \delta\in \Delta\}$$ be the pointwise stabilizer of $\Delta$ in $G$. Using parentheses and braces makes the notation cumbersome, but it has the advantage of avoiding some misunderstandings.

Consider a group $G$ acting on a set $\Omega$. A \textit{\textbf{derangement}} of $G$ is an element $g$ lacking fixed points, that is, $\{\omega\in \Omega\mid \omega^g=\omega\}=\emptyset$. Let  $\mathcal{D}$ denote the set of all derangements within $G$. The \textit{\textbf{derangement graph}} $\Gamma_{G,\Omega}$ of $G$ in its action on $\Omega$ is formed by vertices representing elements of $G$, and edges connecting pairs $(h, g) \in G \times G$ where $gh^{-1} \in \mathcal{D}$. In essence, $\Gamma_{G,\Omega}$ constitutes the \textit{\textbf{Cayley graph}} of $G$ with connection set $\mathcal{D}$. Consequently, an intersecting family of $G$ corresponds to an independent set or coclique in $\Gamma_{G,\Omega}$, and vice versa.

Traditionally, the definition of the derangement graph confines $G$ to act faithfully on $\Omega$, implying that $G$ is a permutation group on $\Omega$. However, in our context, it proves more meaningful to embrace the broader spectrum of group actions. This flexibility is essential, particularly in the course of our work involving inductive arguments, where we often encounter the action of a finite transitive group $G$ on a system of imprimitivity. In such scenarios, there arises a frequent need to lift cliques from the derangement graph of $G$ in its action on the system of imprimitivity to cliques in the derangement graph of $G$ in its action on $\Omega$.

Let $\Gamma$ be a graph and let $G\le\mathrm{Aut}(\Gamma)$ be a group of automorphisms of $\Gamma$. The graph $\Gamma$ is said to be $G$-\textbf{\textit{vertex-transitive}} if $G$ acts transitively on the vertices of $\Gamma$. Moreover, $\Gamma$ is $G$-\textbf{\textit{edge-transitive}} if $G$ acts transitively on the edges of $\Gamma$. These definitions are relevant in Proposition~\ref{prop:technique2}.}

\begin{lemma}\label{centralizers2}
Let $A$ and $B$ be two commuting regular subgroups of $\mathrm{Sym}(\Delta)$. Then either there exist $a\in A\setminus\{1\}$ and $b\in B\setminus \{1\}$ such that $ab$ is a derangement, or $|\Delta|\le 2$. 
\end{lemma}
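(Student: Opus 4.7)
The plan is to exploit the fact that two commuting regular subgroups of $\mathrm{Sym}(\Delta)$ correspond, up to relabeling, to the left and right regular representations of the same abstract group, which reduces the statement to a transparent conjugacy condition.

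First, I would fix $\delta_0\in\Delta$ and use the regularity of $A$ to identify $\Delta$ with $A$ via the bijection $a\mapsto \delta_0^a$. Under this identification, $A$ acts on itself by right multiplication: the element $a_g\in A$ corresponding to $g$ sends $x\mapsto xg$. By Lemma~\ref{centralizes}, since both $A$ and $B$ are transitive on $\Delta$ and commute, we have $B=\mathbf{C}_{\mathrm{Sym}(\Delta)}(A)$; and a direct check shows this centralizer consists exactly of the left multiplications $L_h:x\mapsto hx$ (the point is that any $\sigma$ commuting with all right multiplications satisfies $\sigma(x)=\sigma(1)x$). So $B=\{L_h\mid h\in A\}$.

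Second, I would compute when a product $a_gL_h$ is a derangement. Using the right-action convention $\omega^{\sigma\tau}=(\omega^\sigma)^\tau$ of the paper, one gets $x^{a_gL_h}=hxg$. Hence $a_gL_h$ fixes some point iff there exists $x\in A$ with $hxg=x$, equivalently $h=xg^{-1}x^{-1}$, i.e.\ iff $h$ is conjugate in $A$ to $g^{-1}$. Thus the assumed conclusion of the lemma fails precisely when, for every pair $g,h\in A\setminus\{1\}$, $h$ is conjugate to $g^{-1}$ in $A$.

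Third, I would extract the structural consequence. Taking $h=g$ shows every non-identity element of $A$ is conjugate to its own inverse; combining with $h\sim g^{-1}$ for arbitrary $g,h\neq 1$ shows that all non-identity elements of $A$ lie in a single conjugacy class. That is, $A$ has at most two conjugacy classes. Finally, I would invoke the elementary observation that a finite group with at most two conjugacy classes has order at most $2$: indeed for $g\neq 1$ the centralizer $\mathbf{C}_A(g)$ has order $|A|/(|A|-1)$, forcing $|A|-1\mid 1$. Since $|\Delta|=|A|$, this yields $|\Delta|\le 2$, as required.

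I do not anticipate a serious obstacle; the only care needed is bookkeeping of the action convention in the computation of $x^{a_gL_h}$, to make sure the conjugacy condition is $h\sim g^{-1}$ rather than $h\sim g$ (this sign does not affect the final count of conjugacy classes, but it should be stated correctly).
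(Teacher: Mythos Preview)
Your proposal is correct and follows essentially the same approach as the paper: identify $\Delta$ with a group so that $A$ and $B$ become the right and left regular representations, reduce the fixed-point condition for $ab$ to a conjugacy relation between the corresponding group elements, and conclude via the elementary fact that a finite group with a single non-identity conjugacy class has order at most $2$. The only cosmetic differences are your explicit appeal to Lemma~\ref{centralizes} and your choice of $L_h:x\mapsto hx$ rather than the paper's $\lambda_y:\omega\mapsto y^{-1}\omega$, which shifts the conjugacy condition from $x\sim y$ to $h\sim g^{-1}$ without affecting the argument.
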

\begin{proof}
From the basic theory of permutation groups, we can identify $\Delta$ with the elements of an abstract finite group $X$, we can identify $A$ with the right regular representation of $X$ and we can identify $B$ with the left regular representation of $X$. Thus, $A=\{\rho_x:x\in X\}$, where $\omega\mapsto \omega^{\rho_x}=\omega x$, for every $\omega\in X$. Similarly, $B=\{\lambda_x:x\in X\}$, where $\omega\mapsto \omega^{\lambda_x}=x^{-1}\omega $, for every $\omega\in X$.

Assume $|X|=|\Delta|\ne 1$. Let $x,y\in X\setminus\{1\}$ and consider $\rho_x\lambda_y$. If $\rho_x\lambda_y$ is not a derangement on $X$, then there exists $\omega\in X$ with $$\omega=\omega^{\rho_x\lambda_y}=y^{-1}\omega x,$$
that is, $\omega^{-1} x\omega=y$. In particular, $x$ and $y$ belong to the same $X$-conjugacy class. This shows that as long as $X$ has at least two conjugacy classes of non-identity elements, then there exists $a\in A\setminus\{1\}$ and $b\in B\setminus \{1\}$ such that $ab$ is a derangement. If $X$ has only one conjugacy class of non-identity elements then $|X|=2$.
\end{proof}

\begin{theorem}[{Saxl, see~\cite[Proposition~2]{Saxl}}]
\label{thrm:saxl}Let $T$ be a non-abelian simple group and let $M$ be a subgroup of $T$. If
$$T=\bigcup_{\varphi\in\mathrm{Aut}(T)}M^\varphi,$$
then $T=M$.
\end{theorem}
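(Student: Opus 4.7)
The plan is to translate the covering hypothesis into a statement about the absence of derangements for the coset action of $A:=\mathrm{Aut}(T)$ on $\Omega:=A/M$, and then exploit the simplicity of $T$. A point $aM\in\Omega$ is fixed by $t\in T$ precisely when $a^{-1}ta\in M$, i.e.\ when $t\in M^{a^{-1}}$, so the hypothesis $T=\bigcup_{\varphi\in A}M^\varphi$ is exactly the assertion that no element of $T$ is a derangement of this action.

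First I would reduce to a faithful action. Let $K=\bigcap_{\varphi\in A}M^\varphi$, the kernel of the coset action, which is the normal core of $M$ in $A$. Because $M\le T$ we have $K\le T$; because $K$ is normal in $A$ it is, in particular, normal in $T$. By simplicity either $K=1$ or $K=T$, and in the latter case $M\supseteq K=T$ gives $M=T$ and we are done. So I may assume $K=1$. Since $T\trianglelefteq A$ and $TM=T$, the $T$-orbits on $\Omega$ correspond bijectively to the cosets of $T$ in $A$: there are exactly $|\mathrm{Out}(T)|$ such orbits, each of size $|T:M|$, and on the orbit through $aM$ the group $T$ acts equivalently to its natural action on $T/M^{a^{-1}}$.

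If $M<T$, Jordan's theorem produces an element of $T$ acting as a derangement on each individual orbit, and the content of the proposition is to combine these into a single element of $T$ that is a derangement on every orbit simultaneously. A Burnside-style computation yields $\sum_{t\in T}|\mathrm{Fix}_\Omega(t)|=|A|$, so the mean number of fixed points is $|\mathrm{Out}(T)|$. When $|\mathrm{Out}(T)|=1$, the no-derangement assumption together with this mean forces the permutation character to be identically $1$, whence $|T:M|=1$ and $M=T$; this recovers the Jordan case. In general, one would try to produce the desired derangement inside a single Sylow $p$-subgroup for a prime $p$ for which only few $\mathrm{Aut}(T)$-classes of $p$-elements meet $M$; the covering hypothesis forces every prime dividing $|T|$ to divide $|M|$ and every $\mathrm{Aut}(T)$-class of $T$ to meet $M$, which is a very restrictive condition on $M$.

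The main obstacle is the case $|\mathrm{Out}(T)|>1$, where the naive Jordan bound summed over the $|\mathrm{Out}(T)|$ orbits is too crude. The clean way to close this step is to invoke the classification of finite simple groups together with the detailed structure theory of maximal subgroups, family by family (alternating, classical, exceptional, sporadic), and use bounds on $|\mathrm{Out}(T)|$, centralizer orders, and the spectrum of element orders in $T$ to rule out the possibility that a proper subgroup $M$ meets every $\mathrm{Aut}(T)$-conjugacy class. This is the route taken in Saxl's original proof.
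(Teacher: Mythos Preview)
The paper does not give its own proof of this statement: it is quoted verbatim as a result from the literature, with attribution to~\cite[Proposition~2]{Saxl}, and is used as a black box later in Proposition~\ref{prop:technique1}. There is therefore no argument in the paper to compare your proposal against.

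As for the proposal itself: your setup is correct (the coset action of $A=\Aut(T)$ on $A/M$, the core reduction, the orbit decomposition indexed by $\Out(T)$, and the Burnside count giving mean fixed-point number $|\Out(T)|$), and the deduction in the inner case $|\Out(T)|=1$ is valid. However, the proposal is not a proof: the entire difficulty lies in the case $|\Out(T)|>1$, and there you do not supply an argument but rather describe, in broad terms, that one should run a CFSG case analysis on maximal subgroups, element orders, and centralizer bounds, and then explicitly attribute that analysis to Saxl. In other words, your proposal reduces to citing the very reference the paper already cites. If your aim was to give an independent proof, the gap is precisely the substantive step---showing that no proper $M<T$ meets every $\Aut(T)$-class---and none of the preliminary reductions you carry out make that step any easier.
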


\end{notation}

We have the following observation.

\begin{lemma}\label{l:prel1}
Let $G$ be a transitive permutation group on $\Omega$ and let $\Sigma$ be a system of imprimitivity for the action of $G$ on $\Omega$. If $C\subseteq G$ is a clique for the derangement graph $\Gamma_{G,\Sigma}$ of $G$ in its action on $\Sigma$, then $C$ is also a clique for the derangement graph $\Gamma_{G,\Omega}$ of $G$ in its action on $\Omega$.  
\end{lemma}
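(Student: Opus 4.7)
The plan is to argue via the contrapositive on a single element: if $g \in G$ is a derangement in the action on the block system $\Sigma$, then $g$ is automatically a derangement in the finer action on $\Omega$. Once this point-wise statement is established, the lemma is immediate by applying it to $g = xy^{-1}$ for every pair of distinct $x,y \in C$.

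More precisely, I would first recall that every point $\omega \in \Omega$ lies in a unique block $\Delta(\omega) \in \Sigma$ (since $\Sigma$ is a partition of $\Omega$), and that the action of $G$ on $\Omega$ projects to the action on $\Sigma$ via $\Delta(\omega)^g = \Delta(\omega^g)$. Consequently, if some $g \in G$ fixes a point $\omega \in \Omega$, then $\Delta(\omega)^g = \Delta(\omega^g) = \Delta(\omega)$, so $g$ setwise stabilises a block of $\Sigma$. In other words, any non-derangement in the action on $\Omega$ is also a non-derangement in the action on $\Sigma$; equivalently, every derangement on $\Sigma$ is a derangement on $\Omega$.

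Now suppose $C \subseteq G$ is a clique in $\Gamma_{G,\Sigma}$, and let $x,y \in C$ be distinct. By definition of the derangement graph of $G$ in its action on $\Sigma$, the element $xy^{-1}$ is a derangement on $\Sigma$. By the paragraph above, $xy^{-1}$ is then a derangement on $\Omega$, so $x$ and $y$ are adjacent in $\Gamma_{G,\Omega}$. Since this holds for every pair of distinct elements of $C$, the set $C$ is a clique in $\Gamma_{G,\Omega}$, as required.

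There is no real obstacle here: the entire content of the lemma is the simple observation that the surjection $\Omega \to \Sigma$ intertwines the two $G$-actions, so fixed points upstairs force fixed blocks downstairs. The lemma will be invoked repeatedly in inductive arguments, where one finds cliques in a quotient action on $\Sigma$ and lifts them to cliques in the original action on $\Omega$ at no cost.
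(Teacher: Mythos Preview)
Your proof is correct and follows essentially the same approach as the paper's: both argue that if $gh^{-1}$ fixes no block of $\Sigma$, then it cannot fix any point of $\Omega$, hence $C$ remains a clique in $\Gamma_{G,\Omega}$. Your version simply spells out more explicitly the one-line observation that a fixed point in $\Omega$ forces a fixed block in $\Sigma$.
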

\begin{proof}
Let $C\subseteq G$ be clique for the derangement graph of $G$ in its action on $\Sigma$. 
Now, given two distinct elements $g$ and $h$ of $C$, we have that $gh^{-1}$ fixes no element of $\Sigma$. Since $\Sigma$ is a system of imprimitivity for $\Omega$, $gh^{-1}$ cannot fix any element of $\Omega$. Therefore $C$ is a clique for the derangement graph of $G$ in its action on $\Omega$.  
\end{proof}
The modern key for analyzing a finite primitive permutation group $G$ is to study
the \textit{\textbf{socle}} $N$ of $G$, that is, the subgroup generated by the minimal normal subgroups of $G$. The socle of a non-trivial finite group is isomorphic to the non-trivial
direct product of simple groups; moreover, for finite primitive groups, these simple groups are pairwise isomorphic. The O'Nan-Scott theorem describes in detail
the embedding of $N$ in $G$ and collects some useful information about the action
of $N$. In~\cite[Theorem]{LPSLPS}, five types of primitive groups are defined (depending
on the group- and action-structure of the socle), namely HA (Affine), AS (Almost
Simple), SD (Simple Diagonal), PA (Product Action) and TW (Twisted Wreath),
and it is shown that every primitive group belongs to exactly one of these types.
We remark that in~\cite{19} this subdivision into types is refined, namely the PA type
in~\cite{LPSLPS} is partitioned in three parts, which are called  HC
(Holomorphic Compound), CD (Compound Diagonal) and PA. Moreover, the SD type is partitioned in two parts, which are called HS (Holomorphic Simple) and SD (Simple Diagonal). For what follows,
we find it convenient to use this subdivision into eight types of the finite primitive
permutation groups.\footnote{This division has the advantage that there are no overlaps between the eight O'Nan-Scott types of primitive permutation groups.}

\begin{lemma}\label{l:prel2}
Let $G$ be a transitive permutation group having domain $\Omega$, let $\Sigma$ be a system of imprimitivity and let $\Delta\in \Sigma$. If the action of $G_{\{\Delta\}}$ on $\Delta$ is primitive, then so is the action of $G_{\{{\Delta'}\}}$ on ${\Delta'}$, $\forall {\Delta'}\in\Sigma$. Furthermore, if $G_{\{\Delta\}}$ has O'Nan-Scott type $X$, 
then so does the action of $G_{\{{\Delta'}\}}$ on ${\Delta'}$, $\forall {\Delta'}\in\Sigma$.
\end{lemma}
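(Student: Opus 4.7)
The plan is to exploit the transitivity of $G$ on the system of imprimitivity $\Sigma$ to produce a permutation isomorphism between the actions of $G_{\{\Delta\}}$ on $\Delta$ and $G_{\{\Delta'\}}$ on $\Delta'$. Since primitivity and the O'Nan-Scott type are invariants of a permutation group up to permutation isomorphism, both statements follow at once.

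More concretely, because $G$ is transitive on $\Omega$ and $\Sigma$ is a system of imprimitivity, the induced action of $G$ on $\Sigma$ is transitive. Hence, given $\Delta' \in \Sigma$, I would pick $g \in G$ with $\Delta^g = \Delta'$. Conjugation by $g$ then yields
\[
g^{-1} G_{\{\Delta\}} g = G_{\{\Delta'\}},
\]
and the map $\phi \colon \Delta \to \Delta'$ defined by $\phi(\delta) = \delta^g$ is a bijection. For any $h \in G_{\{\Delta\}}$, writing $h' = g^{-1}hg \in G_{\{\Delta'\}}$, the identity
\[
\phi(\delta^h) = \delta^{hg} = (\delta^g)^{g^{-1}hg} = \phi(\delta)^{h'}
\]
shows that $(\phi, h \mapsto g^{-1}hg)$ is a permutation isomorphism between $(G_{\{\Delta\}}, \Delta)$ and $(G_{\{\Delta'\}}, \Delta')$.

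Since primitivity of a group action is preserved by permutation isomorphism, the primitivity of $G_{\{\Delta\}}$ on $\Delta$ transfers to $G_{\{\Delta'\}}$ on $\Delta'$. Likewise, the O'Nan-Scott type is determined by the group- and action-structure of the socle, which is invariant under permutation isomorphism, giving the second assertion.

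There is no real obstacle here; the content is essentially the observation that the stabilizers $G_{\{\Delta\}}$ and $G_{\{\Delta'\}}$ are conjugate subgroups of $G$ acting in permutation-isomorphic ways on the corresponding blocks. The only thing to be careful about is distinguishing the \emph{setwise} stabilizer, and verifying that the intertwining relation above indeed uses $h'$ rather than $h$, which the computation above makes explicit.
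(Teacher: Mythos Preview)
Your proposal is correct and follows exactly the same idea as the paper: use transitivity of $G$ on $\Sigma$ to find $g\in G$ with $\Delta^g=\Delta'$, deduce $G_{\{\Delta'\}}=(G_{\{\Delta\}})^g$, and conclude that the two block actions are permutation isomorphic, hence share primitivity and O'Nan--Scott type. The paper states this in two lines; your version simply spells out the intertwining map explicitly.
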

\begin{proof}
This follows from the fact that, for every ${\Delta'}\in\Sigma$, there exists $g\in G$ with $\Delta^g={\Delta'}$. In fact, this implies $G_{\{{\Delta'}\}}=G_{\{\Delta^g\}}=(G_{\{\Delta\}})^g$, from which the lemma immediately follows.
\end{proof}

For improving the flow of the argument in the proof of Proposition~\ref{prop:technique1}, we prove the following.

\begin{lemma}\label{prop:technique1aux}
Let $G$ be a transitive permutation group on $\Omega$, let $\Sigma$ be a system of imprimitivity for the action of $G$ on $\Omega$ and let $\Delta\in \Sigma$. Suppose 
\begin{itemize}
\item $G_{\{\Delta\}}$ acts primitively on $\Delta$, 
\item $G_{(\Sigma)}\ne 1$ and 
\item $G_{(\Delta)}=1$.
\end{itemize}
Let $M$ be a minimal normal subgroup of $G$ contained in $G_{(\Sigma)}$. 
 Then 
 \begin{enumerate}
 \item\label{technique1aux1} either $M$ is a minimal normal subgroup of $G_{\{\Delta\}}$ for every $\Delta\in \Sigma$, or 
 \item\label{technique1aux2} $M$ is the socle of $G_{\{\Delta\}}$ and is the direct product of two isomorphic subgroups $M_{1,\Delta},M_{2,\Delta}$ acting regularly on $\Delta$, for every $\Delta\in \Sigma$.\footnote{Observe that the two isomorphic subgroups of $M$ might depend on $\Delta$.}
 \end{enumerate}
\end{lemma}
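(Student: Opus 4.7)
The plan is to work first at a single block $\Delta$, using primitivity of $G_{\{\Delta\}}$ and the classical structure of minimal normal subgroups in primitive groups, and then transfer the dichotomy to every other block via conjugation in $G$, in the spirit of Lemma~\ref{l:prel2}.

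First, since $M\le G_{(\Sigma)}$ each block is $M$-invariant, so $M\le G_{\{\Delta\}}$, and normality of $M$ in $G$ gives $M\trianglelefteq G_{\{\Delta\}}$. The kernel of the action of $M$ on $\Delta$ is $M\cap G_{(\Delta)}=1$, so $M$ acts faithfully on $\Delta$. The $M$-orbits on $\Delta$ form a $G_{\{\Delta\}}$-invariant partition; primitivity of $G_{\{\Delta\}}$ together with $M\ne 1$ and faithfulness rule out the trivial partition and force $M$ to be transitive on $\Delta$.

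Since $M$ is minimal normal in $G$, it is characteristically simple and decomposes as $M=T_1\times\cdots\times T_k$ with $T_i\cong T$ a fixed finite simple group. If $T$ is abelian, then $M$ is elementary abelian and regular on $\Delta$; any proper nontrivial $G_{\{\Delta\}}$-invariant subgroup would induce a nontrivial block system for the primitive action, so $M$ is irreducible as a $G_{\{\Delta\}}$-module and is therefore a minimal normal subgroup of $G_{\{\Delta\}}$, yielding~\eqref{technique1aux1}. If $T$ is nonabelian, the normal subgroups of $M$ are exactly the subproducts $\prod_{i\in S}T_i$, and the $G_{\{\Delta\}}$-normal ones are those where $S$ is a union of $G_{\{\Delta\}}$-orbits on $\{T_1,\ldots,T_k\}$. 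If $G_{\{\Delta\}}$ is transitive on $\{T_1,\ldots,T_k\}$, then $M$ itself is minimal normal in $G_{\{\Delta\}}$, again giving~\eqref{technique1aux1}. Otherwise there are at least two distinct minimal normal subgroups of $G_{\{\Delta\}}$ contained in $M$; at this point I invoke the classical fact that a primitive permutation group has at most two minimal normal subgroups, and when two are present they are nonabelian, isomorphic, mutually centralizing, each acting regularly, with product equal to the socle. This forces exactly two orbits $O_1,O_2$ and yields $M=M_{1,\Delta}\times M_{2,\Delta}$ with $M_{j,\Delta}=\prod_{i\in O_j}T_i$ regular on $\Delta$, and $M$ the socle of $G_{\{\Delta\}}$, which is~\eqref{technique1aux2}.

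For any other $\Delta'\in\Sigma$, choose $g\in G$ with $\Delta^g=\Delta'$; then $M=M^g$ by normality and $G_{\{\Delta'\}}=(G_{\{\Delta\}})^g$, so whichever of~\eqref{technique1aux1} or~\eqref{technique1aux2} holds at $\Delta$ also holds at $\Delta'$, with $M_{j,\Delta'}=(M_{j,\Delta})^g$ in the second case. The step I expect to be the main obstacle is the nonabelian analysis: one must correctly identify the minimal normal subgroups of $G_{\{\Delta\}}$ inside $M$ via the $G_{\{\Delta\}}$-orbits on the simple direct factors, and then apply the at-most-two-minimal-normals theorem for primitive groups to upgrade ``at least two'' to ``exactly two'' and to conclude both the regularity of $M_{1,\Delta}$ and $M_{2,\Delta}$ and that $M$ exhausts the socle of $G_{\{\Delta\}}$.
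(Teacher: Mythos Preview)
Your proposal is correct and follows essentially the same route as the paper's proof. The only cosmetic difference is that in the nonabelian case you organize the argument via the $G_{\{\Delta\}}$-orbits on the simple direct factors $T_1,\ldots,T_k$ of $M$, whereas the paper simply picks a minimal normal subgroup $M_1\le M$ of $G_{\{\Delta\}}$ and, if $M_1\ne M$, a second one $M_2$ containing a leftover factor; both then invoke the same classical fact about primitive groups (at most two minimal normal subgroups, each regular, with product the socle) to finish.
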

\begin{proof}
As $G_{(\Delta)}=1$, the group $G_{\{\Delta\}}$ acts faithfully on $\Delta$. Now, fix $\Delta\in \Sigma$.

Assume first that $M$ is abelian. Since $G_{\{\Delta\}}$ is primitive on $\Delta$ and $M\unlhd G_{\{\Delta\}}$, we deduce that $M$ is transitive on $\Delta$ and, since $G_{\{\Delta\}}$ is faithful on $\Delta$, we deduce that $M$ is regular on $\Delta$. Since this argument does not depend on $\Delta$, we get that $M$ is  the socle of $G_{\{\Delta'\}}$ and is the unique minimal normal subgroup of $G_{\{\Delta'\}}$, for every $\Delta'\in \Sigma$. In particular, part~\eqref{technique1aux1} holds.

Assume now that $M$ is non-abelian and write $M=T_1\times \cdots \times T_\ell$, for some positive integer $\ell$ and for some pairwise isomorphic non-abelian simple groups $T_1,\ldots,T_\ell$. Let $M_1$ be a minimal normal subgroup of $G_{\{\Delta\}}$ with $M_1\le M$. Therefore, relabeling the indexed set $\{1,\ldots,\ell\}$ if necessary, we may suppose that $M_1=T_1\times \cdots \times T_\kappa$, for some $1\le\kappa\le\ell$. If $\kappa=\ell$, then $M$ is a minimal normal subgroup of $G_{\{\Delta\}}$. However, as $M\unlhd G$ and $G$ is transitive on $\Sigma$, we deduce that $M$ is a minimal normal subgroup of $G_{\{\Delta'\}}$, for every $\Delta'\in \Sigma$. Therefore, part~\eqref{technique1aux1} holds. Suppose $\kappa<\ell$. Let $M_2$ be a minimal normal subgroup of $G_{\{\Delta\}}$ with $T_{\kappa+1}\le M_2\le M$. Using the structure of minimal normal subgroups in primitive groups, we deduce that $M_1\cong M_2$ and hence $\ell=2\kappa$. Therefore $M_2=T_{\kappa+1}\times \cdots \times T_{2\kappa}$. In this case, $M_1$ and $M_2$ both act regularly on $\Delta$ and $M=M_1\times M_2$. In particular, $M$ is the socle of $G_{\{\Delta\}}$. As above, since $M\unlhd G$ and $G$ is transitive on $\Sigma$, $M$ is the socle of $G_{\{\Delta'\}}$, for every $\Delta'\in \Sigma$. Thus part~\eqref{technique1aux2} holds.
\end{proof}

\begin{proposition}\label{prop:technique1}
Let $G$ be a transitive permutation group on $\Omega$, let $\Sigma$ be a system of imprimitivity for the action of $G$ on $\Omega$ and let $\Delta\in \Sigma$. Suppose 
\begin{itemize}
\item $G_{\{\Delta\}}$ acts primitively on $\Delta$, 
\item $G_{(\Sigma)}\ne 1$ and 
\item $G_{(\Delta)}=1$.
\end{itemize} Then there exists $g\in G_{(\Sigma)}$ acting as a derangement on $\Omega$.
\end{proposition}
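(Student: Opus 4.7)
The plan is to choose a minimal normal subgroup $M$ of $G$ with $M\le G_{(\Sigma)}$—such an $M$ exists because $G_{(\Sigma)}$ is a non-trivial normal subgroup of $G$—and to exhibit a derangement inside $M$; since $M\subseteq G_{(\Sigma)}$, this gives the desired element. I would then invoke Lemma~\ref{prop:technique1aux} to obtain the structural dichotomy: either $M$ is a minimal normal subgroup of $G_{\{\Delta'\}}$ for every $\Delta'\in\Sigma$ (Case~(1)), or $M$ is the socle of $G_{\{\Delta'\}}$ and splits as $M_{1,\Delta'}\times M_{2,\Delta'}$ with both factors regular on $\Delta'$ (Case~(2)).

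If $M$ is elementary abelian then we are automatically in Case~(1). Here $M$ is a non-trivial normal subgroup of the primitive group $G_{\{\Delta\}}$, hence transitive on $\Delta$, and the assumption $G_{(\Delta)}=1$ makes the action of $M$ on $\Delta$ faithful; combining faithfulness with abelian transitivity gives that $M$ acts \emph{regularly} on $\Delta$. Because $M\unlhd G$ and $G$ is transitive on $\Sigma$, the action of $M$ on every other block $\Delta'\in\Sigma$ is $G$-conjugate to its action on $\Delta$ and is therefore also regular. Thus every non-identity element of $M$ fixes no point in any block, and so is a derangement on $\Omega$.

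In the remaining non-abelian cases, $M\cong T^k$ with $T$ a non-abelian simple group, and in both Case~(1) and Case~(2) $M$ acts transitively on every block. Consequently each point stabilizer $M_\omega=M\cap G_\omega$ ($\omega\in\Omega$) is a proper subgroup of $M$, and since $M\unlhd G$ and $G$ is transitive on $\Omega$, the family $\{M_\omega:\omega\in\Omega\}$ forms a single $G$-conjugacy class. Finding a derangement in $M$ is equivalent to showing that $M\ne\bigcup_{\omega\in\Omega}M_\omega$; writing $\bar G\le\Aut(M)$ for the image of $G$ acting on $M$ by conjugation, this in turn reduces to the Saxl-type covering statement $M\ne\bigcup_{\varphi\in\bar G}\varphi(M_{\omega_0})$. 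When $k=1$, $M$ is simple and the statement is exactly Theorem~\ref{thrm:saxl}. When $k\ge2$ I would either invoke the known extension of Saxl's theorem to characteristically simple groups, or, specifically in Case~(2), use Lemma~\ref{centralizers2} on the commuting regular pair $(M_{1,\Delta},M_{2,\Delta})$ to produce $a\in M_{1,\Delta}\setminus\{1\}$ and $b\in M_{2,\Delta}\setminus\{1\}$ with $ab$ a derangement on $\Delta$, and then use the $G$-conjugate regular decompositions $M=M_{1,\Delta'}\times M_{2,\Delta'}$ to verify that such an element remains a derangement on every block.

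The main obstacle I anticipate is precisely the non-simple characteristically simple case $k\ge 2$: Saxl's theorem as stated covers only a single simple group, and extending it to $T^k$—or equivalently, certifying that one can select a derangement on $\Delta$ which survives as a derangement under every automorphism of $M$ induced by $G$—is the subtlest step. It is likely handled either by a dedicated covering lemma for characteristically simple groups or by a short case analysis through the remaining O'Nan--Scott types (SD, HS, HC, CD, PA, TW) available for $G_{\{\Delta\}}$, using the explicit form of the socle and of $M_{\omega_0}$ in each type.
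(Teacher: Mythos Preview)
Your overall architecture---choose a minimal normal $M\le G_{(\Sigma)}$, invoke Lemma~\ref{prop:technique1aux}, and locate a derangement in $M$---matches the paper, and your treatment of the abelian case is correct. The non-abelian case, however, has a real gap, and it is precisely the one you flag.

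In Case~(2) your proposed use of Lemma~\ref{centralizers2} does not go through. That lemma hands you $a\in M_{1,\Delta}\setminus\{1\}$ and $b\in M_{2,\Delta}\setminus\{1\}$ with $ab$ fixed-point-free on $\Delta$, but the decomposition $M=M_{1,\Delta'}\times M_{2,\Delta'}$ genuinely varies with $\Delta'$ (the paper even footnotes this in Lemma~\ref{prop:technique1aux}). Writing $ab=a'b'$ relative to the $\Delta'$-decomposition, you have no control over $a'$ and $b'$; one of them could well be the identity, and then $ab$ fixes points of $\Delta'$. So ``verify that such an element remains a derangement on every block'' is exactly the missing step, not a formality. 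Likewise, the appeal to a ``known extension of Saxl's theorem to characteristically simple groups'' is too vague: in SD/CD type the point stabiliser $M_{\omega_0}$ is a diagonal subgroup of $T^k$, which is not the shape Saxl's theorem addresses, and no off-the-shelf extension is cited in the paper.

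What the paper actually does is the O'Nan--Scott case split you mention only as a fallback, and the key observation you are missing is \emph{semiregularity of the simple direct factors}. Write $M=T_1\times\cdots\times T_\ell$. In types TW and HA the whole of $M$ is regular on every block. In types HS, HC, SD, CD, each simple factor $T_i$ acts \emph{semiregularly} on every block $\Delta'$ (because in those types the block point stabiliser in the socle is diagonal and meets each $T_i$ trivially). Hence any non-identity element of any single $T_i$ is a derangement on every block, and therefore on $\Omega$; no covering argument is needed at all. Only in types AS and PA does one need Saxl, and there the product-action structure $\Delta=\Lambda^\ell$, $M=T^\ell$, $M_{\omega_0}=T_\lambda^\ell$ lets one project the covering statement $T^\ell=\bigcup_{\varphi}(T_\lambda^\ell)^\varphi$ onto a single coordinate to obtain $T=\bigcup_{\varphi\in\Aut(T)}T_\lambda^\varphi$, which is exactly Theorem~\ref{thrm:saxl}. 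This is the clean reduction that replaces your unspecified ``extension of Saxl''.
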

\begin{proof}
The hypothesis $G_{(\Delta)}= 1$ implies that the action of $G_{\{\Delta\}}$ on $\Delta$ is faithful. Whereas, the condition $G_{(\Sigma)}\ne 1$ implies that $G$ does not act faithfully on the system of imprimitivity $\Sigma$, that is, there exists $1\ne g\in G$ with ${\Delta'}^g={\Delta'}$, $\forall {\Delta'}\in\Sigma$.

Since $G_{(\Delta)}=1$ and since $G_{(\Sigma)}\le G_{\{\Delta\}}$, the group $G_{(\Sigma)}$ acts faithfully on $\Delta$.

We use the O'Nan-Scott theorem. Let  ${\Delta'}\in \Sigma$ and let $g\in G$ with ${\Delta'}=\Delta^g$. According to Lemma~\ref{l:prel2}, the O'Nan-Scott type of the group $G_{\{\Delta\}}$ acting on $\Delta$ is identical to that of $G_{\{{\Delta'}\}}$ acting on ${\Delta'}$.

Let $S$ be a minimal normal subgroup of $G$ contained in $G_{(\Sigma)}$. We use Lemma~\ref{prop:technique1aux}.

When $G_{\{\Delta\}}$ (in its action on $\Delta$) has O'Nan-Scott type HA or TW, $S$ is the socle of $G_{\{{\Delta'}\}}$ and hence it acts regularly on ${\Delta'}$, for every ${\Delta'}\in \Sigma$. Therefore, any non-identity element of $S$ is a derangement on $\Omega$.

When $G_{\{\Delta\}}$  (in its action on $\Delta$)
has O'Nan-Scott type HS or HC, $S=S_{1,\Delta'}\times S_{2,\Delta'}$, where $S_{1,\Delta'}$ and $S_{2,\Delta'}$ are the two minimal normal subgroups of $G_{\{{\Delta'}\}}$ generating the socle of $G_{\{{\Delta'}\}}$, $\forall {\Delta'}\in \Sigma$. Moreover, from the structure of primitive groups of HS and HC type, every simple direct factor of $S$  acts semiregularly on ${\Delta'}$, for each ${\Delta'}\in \Sigma$. Therefore, any non-identity element of $S$ lying on a simple direct factor of $S$ is a derangement on $\Omega$.

Suppose $G_{\{\Delta\}}$  (in its action on $\Delta$) has O'Nan-Scott type SD or CD.  Then $$S=T_1\times\cdots \times T_{\ell},$$ where $T_1,\ldots,T_{\ell}$ are pairwise isomorphic non-abelian simple groups.  Observe that, from the structure of primitive groups of  O'Nan-Scott type SD and CD, for each $i\in \{1,\ldots,\ell\}$, the group $T_i$ acts semiregularly on the domain ${\Delta'}$, for each ${\Delta'}\in\Sigma$. In particular, from this it follows that, for every $i$, the non-identity elements of $T_i$ are derangements on $\Omega$.

Suppose that $G_{\{\Delta\}}$ is of type AS or PA. In particular, $S$ is the unique minimal normal subgroup of $G_{\{\Delta'\}}$ and is the socle of $G_{\{{\Delta'}\}}$, $\forall{\Delta'}\in\Sigma$. Suppose that $S$ has no derangement in its action on $\Omega$. Let $\omega_0\in \Delta$. Then
\begin{equation}\label{eq:A}S=\bigcup_{\omega\in \Omega}S_\omega=\bigcup_{g\in G}S_{\omega_0}^g.\end{equation}
For every $g\in G$, we have $S^g=S$ and hence the action of $g$ by conjugation on $S$ induces an automorphism of $S$. Therefore, from~\eqref{eq:A}, we have
\begin{equation}\label{eq:B}S=\bigcup_{\varphi\in \mathrm{Aut}(S)}S_{\omega_0}^\varphi.
\end{equation}
From the definition of primitive groups of AS and PA type, the $G_{\{\Delta\}}$-set $\Delta$ admits a Cartesian decomposition and hence $\Delta=\Lambda^\ell$, for some finite set ${\Lambda}$ with $|{\Lambda}|\ge 5$ and for some $\ell\ge 1$ (the value $\ell=1$ corresponds to the case that $G_{\{\Delta\}}$ is of AS type). Moreover,
$$G_{\{\Delta\}}\le H\mathrm{wr}\mathrm{Sym}(\ell),$$
where $H\le \mathrm{Sym}({\Lambda})$, $H$ is almost simple and acts primitively on ${\Lambda}$, and $G_{\{\Delta\}}$ is endowed of the product action. We identify the elements of $\Delta$ with the elements of ${\Lambda}^\ell$ and we identify $G_{\{\Delta\}}$ with its image in the embedding in $H\mathrm{wr}\mathrm{Sym}(\ell)$. Here, the socle $S$ of $G_{\{\Delta\}}$ is $T^\ell$, where $T$ is the socle of $H$. Replacing the element $\omega_0\in \Delta$ if necessary, we may suppose that $\omega_0=(\lambda,\ldots,\lambda)$, for some $\lambda\in \Lambda$. Therefore
$$S_{\omega_0}=T_{\lambda}\times \cdots \times T_{\lambda}=T_{\lambda}^\ell.$$
Since $\mathrm{Aut}(S)=\mathrm{Aut}(T^\ell)=\mathrm{Aut}(T)\mathrm{wr}\mathrm{Sym}(\ell)$, from~\eqref{eq:B}, we deduce
\begin{equation}\label{eq:C}
T^\ell=S=\bigcup_{\varphi\in\mathrm{Aut}(S)}S_{\omega_0}^\varphi=\bigcup_{\varphi\in \mathrm{Aut}(T^\ell)}(T_{\lambda}^\ell)^\varphi=
\bigcup_{\varphi\in \mathrm{Aut}(T)\mathrm{wr}\mathrm{Sym}(\ell)}(T_{\lambda}^\ell)^\varphi.
\end{equation}
Since $T_{\lambda}^\ell$ is normalized by $\mathrm{Sym}(\ell)$, that is, $$(T_{\lambda}^\ell)^\varphi=T_{\lambda}^\ell,\,\hbox{   }\forall \varphi\in\mathrm{Sym}(\ell),$$ 
from~\eqref{eq:C}, we deduce
\begin{equation}\label{eq:D}T^\ell=\bigcup_{\varphi_1,\ldots,\varphi_\ell\in \mathrm{Aut}(T)}T_{\lambda}^{\varphi_1}\times \cdots \times T_{\lambda}^{\varphi_\ell}.\end{equation}
In particular, by considering only the first direct factor of $T^\ell$, from~\eqref{eq:D}, we get
$$T=\bigcup_{\varphi\in\mathrm{Aut}(T)}T_{\lambda}^\varphi.$$
By a celebrated result of Saxl~\cite{Saxl}, this is not possible: see Theorem~\ref{thrm:saxl}.
\end{proof}

\subsection{Hypergraphs}\label{hypergraphs}
We need some basic definitions on hypergraphs, because this terminology is needed in the statement and in the proof of Proposition~\ref{prop:technique2}. A \textit{\textbf{hypergraph}} is an ordered pair $\Gamma=(V,E)$, where $V$ is a finite set and $E$ is a collection of subsets of $V$. We are only interested in $k$-\textit{\textbf{uniform}} hypergraphs, that is, hypergraphs where each element in $E$ has cardinality $k$, for a fixed non-negative integer $k$. 

Up to this point, everything is standard, but in our application, we need a further twist. Let $a$ and $b$ be positive integers with $a\ge 2$. An $(a,b)$-\textit{\textbf{hypergraph}} is an ordered pair $\Gamma=(V,E)$, where $V$ is a finite set and where the elements $e$ of $E$ are partitions of $ab$-subsets of $V$ into $b$ parts each having cardinality $a$. For instance, when $a=2$ and $b=3$, a typical element of $E$ is of the form
$$\{\{v_1,v_2\},\{v_3,v_4\},\{v_5,v_6\}\}.$$
In the special case that $b=1$, our definition returns the notion of $a$-uniform hypergraph.

An \textit{\textbf{automorphism}} $\varphi$ of the $(a,b)$-hypergraph $\Gamma$ is a permutation of $V$ preserving $E$, that is, $e^\varphi = e$ for every $e\in E$. Observe that $\varphi$ fixes $e$ as a partition and hence $\varphi$ is allowed to permute the parts within $e$. Let $G$ be a group acting on $\Gamma$ by automorphisms (in particular, we are not insisting that $G$ acts faithfully on $V$). We say that $\Gamma$ is $G$-\textit{\textbf{vertex transitive}} if $G$ acts transitively on $V$ and, analogously, we say that $\Gamma$ is $G$-\textit{\textbf{edge transitive}} if $G$ acts transitively on $E$. 

In Proposition~\ref{prop:technique2} we are interested only in certain  $(a,b)$-hypergraphs. We say that $G$ is \textit{\textbf{special}} for $\Gamma$ if
\begin{itemize}
\item $\Gamma$ is $G$-vertex and $G$-edge-transitive, and
\item for every edge $e=\{p_1,\ldots,p_b\}\in E$, the edge stabilizer $G_e$ is transitive on the $b$ parts of $e$, and either
\begin{itemize}
\item $G_e$ is also transitive on $p_1\cup\cdots\cup p_b$, that is, $G_e$ is transitive on the underlying set of vertices appearing in $e$, or
\item $a=2$. 
\end{itemize} 
\end{itemize}

Given a positive integer $c$, a \textit{\textbf{colouring}} of the $(a,b)$-hypergraph $\Gamma$ with $c$ colours is a function $\eta:V\to \{1,\ldots,c\}$ such that there exists no $e=\{p_1,\ldots,p_b\}\in E$ where $p_i$ is \textit{\textbf{monochromatic}}\footnote{A subset of $V$ is monochromatic if all of its elements have the same image via $\eta$.} for every $i$. In other words, for every $e=\{p_1,\ldots,p_b\}$, there exists $i\in \{1,\ldots,b\}$ such that $p_i$ is not monochromatic. The \textit{\textbf{chromatic number}} $\chi(\Gamma)$ of $\Gamma$ is the minimum $c$ such that $\Gamma$ admits a colouring with $c$ colours. In the special case that $b=1$ and $a=2$, our definition of chromatic number coincides with the classic definition of chromatic number of a graph.

Proposition~\ref{prop:technique2} complements Proposition~\ref{prop:technique1}. Indeed, while Proposition~\ref{prop:technique1} assumes $G_{(\Delta)}= 1$, Proposition~\ref{prop:technique2} extends this by considering the case  $G_{(\Delta)}\ne 1$, albeit under a marginally stronger assumption   on the permutation group induced by $G_{\{\Delta\}}$ on $\Delta$.
\begin{proposition}\label{prop:technique2}
Let $G$ be a permutation group on $\Omega$, let $\Sigma$ be a system of imprimitivity for the action of $G$ on $\Omega$ and let $\Delta\in \Sigma$. Suppose
\begin{itemize}
\item  $G_{\{\Delta\}}$ acts primitively on $\Delta$,
\item $G_{(\Sigma)}\ne 1$,
\item $G_{(\Delta)}\ne 1$, and
\item the primitive permutation group induced by $G_{\{\Delta\}}$ on $\Delta$ has non-abelian socle. 
\end{itemize}
Let $N$ be the socle of $G_{(\Sigma)}$ and  let $\omega\in \Delta$.
Then either there exists $g\in G_{(\Sigma)}$ acting as a derangement on $\Omega$, or $N=S_1\times \cdots\times S_\ell$ such that
\begin{enumerate}
\item\label{technique2:nr1} $S_i$ is a minimal normal subgroup of $G_{(\Sigma)}$ for each $i\in \{1,\ldots,\ell\}$, and $\ell\ge 5$,
\item\label{technique2:nr2} $S_i\cong T^\kappa$, for some non-abelian simple group $T$ and some $\kappa\ge 1$, and if $\ell=5$, then $T\cong \mathrm{Alt}(5)$,
\item\label{technique2:nr3} $G$ acts transitively by conjugation on $\{S_1,\ldots,S_\ell\}$,
\item\label{technique2:nr4} there exists  an $(a,b)$-hypergraph $\Gamma$ such that
\begin{enumerate}
\item $\Gamma$ has vertex set $\{S_1,\ldots,S_\ell\}$,
\item\label{technique2:nr42}  $\chi(\Gamma)\ge t+1$, where $t$ is the number of $\mathrm{Aut}(T^\kappa)$-conjugacy classes in $T^\kappa$
\item  $G/G_{(\Sigma)}$ is special  for $\Gamma$,
\item $G_{\{\Delta\}}$ fixes some  edge $e=\{p_1,\ldots,p_b\}$ of $\Gamma$,
\end{enumerate}
\item\label{technique2:nr5} we have $$N\cap G_\omega=\prod_{i=1}^b\mathrm{Diag}\left(\prod_{x\in p_i}S_x\right)\times \prod_{\substack{k=1\\k\notin p_1\cup\cdots\cup p_b}}^\ell S_k.$$
\end{enumerate}
\end{proposition}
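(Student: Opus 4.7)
The plan is to argue by contrapositive: assuming no element of $G_{(\Sigma)}$ acts as a derangement on $\Omega$, I derive the stated structure on $N$ and on $\Gamma$. The starting observation is that every element of $N\le G_{(\Sigma)}$ then fixes some point of $\Omega$; since $G$ is transitive on $\Omega$ and normalises $N$ (as $N$ is characteristic in $G_{(\Sigma)}\trianglelefteq G$), this rewrites as
$$N=\bigcup_{\omega'\in\Omega}N_{\omega'}=\bigcup_{\varphi\in\Phi}N_\omega^{\varphi},$$
where $\Phi\le\Aut(N)$ is the image of the conjugation action of $G$. Combined with the O'Nan--Scott structure of the primitive group $G_{\{\Delta\}}^{\Delta}$ induced on $\Delta$ (which by hypothesis has non-abelian socle $L$), this Saxl-style cover is what drives conclusions (1)--(5).

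First I decompose $N=S_1\times\cdots\times S_\ell$ into its minimal normal subgroups, so each $S_i\cong T_i^{\kappa_i}$. I would show that every $S_i$ is non-abelian: an abelian $S_i$ would have trivial image in $G_{\{\Delta\}}^{\Delta}$ (a primitive group with non-abelian socle has no nontrivial abelian normal subgroup), hence $S_i\le G_{(\Delta)}$; transitivity of $G$ on $\Sigma$ and faithfulness of $G$ on $\Omega$ then force $S_i=1$, a contradiction. Next I prove (3): if $G$ had orbits $\mathcal{O}_1,\dots,\mathcal{O}_r$ on $\{S_1,\dots,S_\ell\}$ with $r\ge 2$, the subgroups $N_j:=\prod_{S\in\mathcal{O}_j}S$ would be commuting normal subgroups of $G_{\{\Delta\}}$ whose nontrivial images on $\Delta$ must each contain $L$; but commutation gives $L\le N_1^{\Delta}\le C_{G_{\{\Delta\}}^{\Delta}}(L)$, contradicting $L$ being non-abelian. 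Transitivity now yields $T_i\cong T$ and $\kappa_i=\kappa$, which is (2) apart from the $\Alt(5)$ clause.

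The heart of the proof is the construction of $\Gamma$ from the shape of $N_\omega=N\cap G_\omega$. Working through the non-abelian O'Nan--Scott types available to $G_{\{\Delta\}}^{\Delta}$ (AS, PA, SD, CD, HS, HC, TW), the stabiliser $N_\omega$ decomposes canonically into a product of full factors $S_k$ (those with $S_k\le G_\omega$) together with diagonal subgroups $\mathrm{Diag}(\prod_{x\in p_i}S_x)$, where the parts $p_1,\dots,p_b$ form a partition of a subset of $\{S_1,\dots,S_\ell\}$ into blocks of common size $a$; this is exactly formula (5), and $e=\{p_1,\dots,p_b\}$ is the edge of $\Gamma$ attached to $\Delta$. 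Taking $E$ to be the $G$-orbit of $e$, vertex- and edge-transitivity of $\Gamma$ under $G/G_{(\Sigma)}$ and the transitivity of the edge stabiliser on the $b$ parts (and on their union when $a\neq 2$) follow from (3), from transitivity of $G$ on $\Sigma$, and from the permutation action that the almost simple or wreath factor of $G_{\{\Delta\}}^{\Delta}$ induces on the simple direct factors of its socle; this gives (4)(a),(c),(d).

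For the chromatic bound (4)(b), suppose for contradiction that $\chi(\Gamma)\le t$ and fix a colouring $\eta:\{S_1,\dots,S_\ell\}\to\{1,\dots,t\}$ together with representatives $r_1,\dots,r_t$ of the $\Aut(T^{\kappa})$-conjugacy classes of $T^{\kappa}$. Define $g\in N$ by placing $r_{\eta(i)}$ in $S_i$ under a fixed identification $S_i\cong T^{\kappa}$. For any $\omega'\in\Delta'$, the diagonal form of $N_{\omega'}$ (obtained from (5) by $G$-conjugation) forces $g\in N_{\omega'}$ only if every part $p_i'$ of the edge associated with $\Delta'$ is $\eta$-monochromatic; since $\eta$ is a valid colouring some $p_i'$ is not monochromatic, so $g$ lies in no $N_{\omega'}$ and is a derangement on $\Omega$, contradicting the standing assumption. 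The residual claims $\ell\ge 5$, and $T\cong\Alt(5)$ when $\ell=5$, follow by enumerating the possible special $(a,b)$-hypergraphs on at most $5$ vertices and comparing their chromatic numbers with the lower bound $t+1$. The main obstacle is the third paragraph: one must verify, uniformly across all admissible non-abelian O'Nan--Scott types for $G_{\{\Delta\}}^{\Delta}$, that $N_\omega$ really has the diagonal-plus-free product shape of (5) and that its $G$-orbit packages into a single well-defined special $(a,b)$-hypergraph.
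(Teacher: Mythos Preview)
Your overall architecture --- assume no derangement in $G_{(\Sigma)}$, translate this into a covering condition on $N$, build the hypergraph from the shape of $N_\omega$, and prove the chromatic bound by a colouring/derangement contradiction --- matches the paper, and your argument for~\eqref{technique2:nr42} is essentially identical to theirs. Where you diverge is in how you obtain the structural conclusions~\eqref{technique2:nr1}--\eqref{technique2:nr3} and~\eqref{technique2:nr5}. The paper does \emph{not} attempt a direct O'Nan--Scott analysis here. Instead it chooses a maximal subgroup $W$ of $G_{(\Sigma)}$ containing $G_\omega\cap G_{(\Sigma)}$, observes that the no-derangement hypothesis makes $W$ an $A$-covering subgroup of $H$ (with $A=G$, $H=G_{(\Sigma)}$), checks that $\bigcap_{a\in A}W^a=1$ via primitivity of $G_{\{\Delta\}}$ on $\Delta$, and then invokes Praeger's Proposition~3.1 and Corollary~3.2 from~\cite{Praeger} as a black box. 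Those results deliver~\eqref{technique2:nr1}--\eqref{technique2:nr3} outright and force $G_{\{\Delta\}}/G_{(\Delta)}$ into $H\wr\Sym(b)$ with $H$ of SD or HS type, from which the diagonal description~\eqref{technique2:nr5} and the hypergraph follow. Your ``main obstacle'' is therefore exactly the content of Praeger's theorem; you are proposing to reprove it.

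Within your direct route there is also a concrete gap. Your proof of~\eqref{technique2:nr3} asserts that each nontrivial image $N_j^{\Delta}$ must contain the socle $L$ of $G_{\{\Delta\}}^{\Delta}$. That is true when $L$ is the unique minimal normal subgroup, but fails in the HS/HC types, where the socle is a product $L_1\times L_2$ of two commuting regular minimal normal subgroups; nothing you have written prevents $N_1^{\Delta}=L_1$ and $N_2^{\Delta}=L_2$, which is perfectly compatible with $[N_1,N_2]=1$. Similarly, your exclusion of an abelian $S_i$ needs a small repair: $S_i$ is only normal in $G_{(\Sigma)}$, not in $G_{\{\Delta\}}$, so you must first pass to the product of its $G$-conjugates before concluding that its image on $\Delta$ is a normal abelian subgroup of $G_{\{\Delta\}}^{\Delta}$. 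On the positive side, your idea of deriving $\ell\ge 5$ and the $\mathrm{Alt}(5)$ clause \emph{a posteriori} from the chromatic inequality $\chi(\Gamma)\ge t+1$ (using $t\ge 4$ and $\chi(\Gamma)\le \ell$) is a genuinely different and rather elegant alternative to citing Praeger's Corollary~3.2; the paper does not argue this way.
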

\begin{proof}
Let $W$ be a maximal subgroup of $G_{(\Sigma)}$ with $G_\omega\cap G_{(\Sigma)}\le W$.
The proof follows combining Proposition~3.1 and Corollary~3.2 in~\cite{Praeger}. We give some details to see how to deduce our proposition from the work in~\cite[Section~3]{Praeger}: we use the notation therein in what follows. Set $H:=G_{(\Sigma)}$ and $A:=G$ and assume that $H=G_{(\Sigma)}$ has no derangement. This means that
$$H=\bigcup_{a\in A}W^a,$$
because $H_\omega=G_{(\Sigma)}\cap G_\omega\le W$ by hypothesis. Therefore, using the terminology in~\cite{Praeger}, $W$ is an $A$-covering subgroup of $H$. 
Let $K=\bigcap_{a\in A}W^a$. Assume $K\ne 1$. Since $G_{\{\Delta\}}$ is primitive on $\Delta$ and since $K\unlhd G_{\{\Delta\}}$, we deduce that either $K$ is transitive on $\Delta$, or $K\le G_{(\Delta)}$. The latter possibility is excluded by the fact that $1\ne K\unlhd G$. Therefore,  $K$ is transitive on $\Delta$ and hence $G_{\{\Delta\}}=KG_\omega$. Intersecting both sides by $G_{(\Sigma)}$ and using the modular law, we get $G_{(\Sigma)}=K(G_{(\Sigma)}\cap G_\omega)\le KW\le W$, which contradicts the fact that $W\ne G_{(\Sigma)}$. This shows that $\bigcap_{a\in A}W^a=K=1$ and hence the hypothesis of Proposition~3.1 in~\cite{Praeger} are all satisfied. Therefore parts~\eqref{technique2:nr1}--~\eqref{technique2:nr3} follow immediately from~\cite[Proposition~3.1 and Corollary~3.2]{Praeger}. Proposition~3.1 in~\cite{Praeger} also gives an explicit description of $N\cap W$, which we now use to prove the remaining parts in the statement.

It follows from~\cite[Theorem~4.6A]{dixon} and~\cite[Proposition~3.1 part~(iii)]{Praeger} that $G_{\{\Delta\}}/G_{(\Delta)}$ is embedded in 
$$H\,\mathrm{wr}\,\mathrm{Sym}(b),$$
with the primitive product action, where $H$ is a primitive group of SD type or HS type. We are allowing here $b=1$. 

We consider the function $\theta $ having domain $\Sigma$ and codomain the set of all subsets of $\{S_1,\ldots,S_\ell\}$. For each $\Delta'\in \Sigma$, we let $\theta(\Delta')=\{S_i\mid S_i\nleq G_{(\Delta')}\}$. Since $G$ acts transitively on $\Sigma$, the sets $\theta(\Delta')$ have all the same cardinality, say $k$. Moreover, from the O'Nan-Scott type of $G_{\{\Delta\}}/G_{(\Delta)}$ that we have highlighted in the previous paragraph, we deduce that $\theta(\Delta')$ is endowed of a natural partition into $b$ sets of cardinality $k/b$. Let $V=\{S_1,\ldots,S_\ell\}$, let $a=k/b$ and let $E$ be the image of $\theta$. In particular, $\Gamma=(V,E)$ is an $(a,b)$-hypergraph. Since  the action of $G$ by conjugation on $V$ is transitive and since $G$ is transitive on $\Sigma$, we deduce that $\Gamma$ is $G$-vertex and $G$-edge transitive. Moreover, $G_{\{\Delta\}}$ fixes the edge $\theta(\Delta)$ and, from the O'Nan-Scott type of $G_{\{\Delta\}}/G_{(\Delta)}$ we deduce that  $G/G_{(\Sigma)}$ is special for $\Gamma$. Before discussing~\eqref{technique2:nr42}, we observe that~\eqref{technique2:nr5} follows from the way that $\Gamma$ is defined and again from the O'Nan-Scott type of $G_{\{\Delta\}}/G_{(\Delta)}$.

Let $t$ be the number of $\mathrm{Aut}(T^\kappa)$-conjugacy classes in $T^\kappa$ and let $x_1,\ldots,x_t$ be a set of representatives.
Let $c$ be the chromatic number of $\Gamma$. Suppose $c\le t$. In particular, there exists a colouring $\eta$ of $V$ where we can use as ``colours'' the elements in $\{x_1,\ldots,x_t\}$. Using the colouring $\eta$, consider $$g=({\eta(S_1)},{\eta(S_2)},\ldots,{\eta(S_\ell)})\in N,$$
in other words, we are considering the element of $N$ where in the $i^{\mathrm{th}}$th coordinate we put the element having colour $\eta(S_i)$. Since we are dealing with the case that $G_{(\Sigma)}$ has no derangements, we deduce that $g$ fixes some point of $\Omega$, because $N\le G_{(\Sigma)}$. Without loss of generality $g$ fixes $\omega$ and hence $g\in N\cap G_\omega$. However, from~\eqref{technique2:nr5}, we see that the coordinates of $g$ belonging to a same part of the edge $\theta(\Delta)$ must be conjugate under some automorphism of $T^\kappa$, that is, all the parts of the edge $\theta(\Delta)$ are monochromatic, contradicting the fact that $\eta$ is a colouring of $\Gamma$. This contradiction has arisen from assuming $c\le t$ and hence~\eqref{technique2:nr42} follows.
 \end{proof}

\begin{remark}{\rm 
The hypotheses in Proposition~\ref{prop:technique2} seem rather unnatural, but they are actually necessary. Indeed, consider for instance the non-abelian simple group $T=\mathrm{Alt}(5)$, let $G=\mathrm{Aut}(T)\mathrm{wr}\mathrm{Sym}(5)$ and let $N=T^{4}$ be the base group of $G$. Let $H=\{(t_1,t_2,t_3,t_4,t_5)\in N\mid t_1=t_2\}\cong T^{5}$ and let $\Omega$ be the set of right cosets of $H$ in $G$. Thus $|\Omega|=|\mathrm{Sym}(5)|[N:H]=120\cdot 60^2$ and $G$ can be viewed as a permutation group on $\Omega$. 

Let $\Sigma$ be the system of imprimitivity given by the orbits of $N$ on $\Omega$. Thus, $G_{(\Sigma)}=N\ne 1$. For every $\Delta\in \Sigma$, we have $G_{\{\Delta\}}=N$ because $G$ acts regularly on $\Sigma$. Moreover, as $H$ is a maximal subgroup of $N$, we deduce that $G_{\{\Delta\}}$ acts primitively on $\Delta$. Finally, as the core of $H$ in $N$ is isomorphic to $T^{3}$, we deduce $G_{(\Delta)}\cong T^{3}\neq 1$.

Let $(t_1,t_2,t_3,t_4,t_5)\in N=G_{(\Sigma)}$. As $T$ has only four $\mathrm{Aut}(T)$-conjugacy classes, there exist two distinct indices $i,j\in\{1,\ldots,5\}$ with $t_i^{\mathrm{Aut}(T)}=t_j^{\mathrm{Aut}(T)}$. Since $\mathrm{Sym}(5)$ acts $2$-transitively on the $5$ simple direct factors of $N$, there exists $\sigma\in G$ with $(t_1,t_2,t_3,t_4,t_5)^\sigma\in H$. This shows that every element of $G_{(\Sigma)}$ fixes some point of $\Omega$ and hence no element of $G_{(\Sigma)}$ acts as a derangement on $\Omega$.

In this example, the auxiliary $(a,b)$-hypergraph arising in Proposition~\ref{prop:technique2} part~\eqref{technique2:nr4} has parameters $a=2$ and $b=1$, and is the complete graph on $5$ vertices.
}
\end{remark}

We conclude this section with a combinatorial lemma that is needed when we apply Proposition~\ref{prop:technique2}.
\begin{lemma}\label{l:conjclasses}
Let $T$ be a non-abelian simple group and let $t$ be the number of $\mathrm{Aut}(T)$-conjugacy classes of elements of $T$, and let $\kappa$ be a positive integer.  Then ${t+\kappa-1\choose \kappa}$ is the number of $\mathrm{Aut}(T^\kappa)$-conjugacy classes of elements of $T^\kappa$.
\end{lemma}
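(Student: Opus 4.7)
The plan is to identify $\mathrm{Aut}(T^\kappa)$-orbits on $T^\kappa$ with multisets of size $\kappa$ drawn from the set of $\mathrm{Aut}(T)$-classes of $T$, and then invoke the standard stars-and-bars count.

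First, since $T$ is a non-abelian simple group, the centre of $T$ is trivial and the minimal normal subgroups of $T^\kappa$ are exactly the $\kappa$ simple direct factors. A standard argument (see e.g.\ the structure theory of characteristically simple groups) then yields the isomorphism
\[
\mathrm{Aut}(T^\kappa)\;\cong\;\mathrm{Aut}(T)\,\mathrm{wr}\,\mathrm{Sym}(\kappa),
\]
where the base group $\mathrm{Aut}(T)^\kappa$ acts coordinatewise and $\mathrm{Sym}(\kappa)$ permutes the $\kappa$ coordinates. I would state this once and use it freely.

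Next, I would describe the orbits explicitly. For $(t_1,\ldots,t_\kappa)\in T^\kappa$ and $(\varphi_1,\ldots,\varphi_\kappa;\sigma)\in\mathrm{Aut}(T)\,\mathrm{wr}\,\mathrm{Sym}(\kappa)$, the action sends $(t_1,\ldots,t_\kappa)$ to a tuple whose $i$-th entry is $\varphi_i(t_{\sigma^{-1}(i)})$. From this it is immediate that each coordinate $t_{\sigma^{-1}(i)}$ is sent to an element of its own $\mathrm{Aut}(T)$-conjugacy class in $T$, possibly after being moved to a new position. Hence, if $\mathcal{C}_1,\ldots,\mathcal{C}_t$ are the $\mathrm{Aut}(T)$-classes of $T$, the orbit of $(t_1,\ldots,t_\kappa)$ is determined by, and determines, the multiset
\[
\bigl\{\!\!\bigl\{\,\mathcal{C}(t_1),\mathcal{C}(t_2),\ldots,\mathcal{C}(t_\kappa)\,\bigr\}\!\!\bigr\}
\]
of size $\kappa$ with entries from $\{\mathcal{C}_1,\ldots,\mathcal{C}_t\}$, where $\mathcal{C}(x)$ denotes the $\mathrm{Aut}(T)$-class of $x$. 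One direction is the observation above; for the converse, given two tuples with the same multiset of classes, a permutation $\sigma\in\mathrm{Sym}(\kappa)$ aligns the classes coordinatewise, after which componentwise automorphisms of $T$ send one tuple to the other.

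Finally, the number of multisets of size $\kappa$ drawn from a set of $t$ elements is the classical stars-and-bars count $\binom{t+\kappa-1}{\kappa}$, giving the stated formula. The only subtle point in the whole argument is to make sure the wreath-product description of $\mathrm{Aut}(T^\kappa)$ is invoked correctly for a non-abelian simple $T$; everything else is a transparent bijection followed by a routine combinatorial identity, so no serious obstacle is expected.
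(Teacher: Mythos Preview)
Your proof is correct and follows essentially the same approach as the paper: both use the identification $\mathrm{Aut}(T^\kappa)\cong\mathrm{Aut}(T)\,\mathrm{wr}\,\mathrm{Sym}(\kappa)$ to reduce the orbit count to multisets of size $\kappa$ from the $t$ classes, and then apply the stars-and-bars formula. If anything, your version is slightly more explicit in checking both directions of the bijection between orbits and multisets.
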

\begin{proof}
This follows from classical problem in combinatorics, see~\cite[Section~3.4 and Theorem~3.4.2]{lovaz}. Let $x_1,\ldots,x_t$ be representatives for the $\mathrm{Aut}(T)$-classes of elements of $T$. Every element in $T^\kappa$ is $\mathrm{Aut}(T^\kappa)$-conjugate to an element $(y_1,\ldots,y_\kappa)$, where $y_1,\ldots,y_\kappa\in \{x_1,\ldots,x_t\}$. Moreover, as $\mathrm{Aut}(T^\kappa)=\mathrm{Aut}(T)\mathrm{wr}\mathrm{Sym}(\kappa)$, the order of the elements appearing in the coordinates of  $(y_1,\ldots,y_\kappa)$ does not change its $\mathrm{Aut}(T^\kappa)$-conjugacy class. Therefore, using the terminology in~\cite{lovaz}, the number of $\mathrm{Aut}(T^\kappa)$-conjugacy classes in $T^\kappa$ equals the number of ways to distribute $\kappa$ identical pennies to $t$ children. 
\end{proof}

\section{Step 1}\label{sec:step1}
We start by restating Theorem~\ref{thrm:main}.

\smallskip

\noindent\textbf{Theorem }
Let $G$ be a finite transitive permutation group on $\Omega$. Then either the derangement graph of $G$ in its action on $\Omega$ has a clique of size at least $4$, or one of the following holds
\begin{enumerate}
\item $|\Omega|\le 3$ and $\mathrm{Alt}(\Omega)\le G\le\mathrm{Sym}(\Omega)$,
\item$|\Omega|=6$ and $G\cong\mathrm{Alt}(4)$,
\item$|\Omega|=18$, $|G|=324$ and $G$ is the group $(18,142)$ in the library of transitive groups in \texttt{magma},
\item$|\Omega|=30$, and $G$ is the group $(30,126)$ or $(30,233)$ in the library of transitive groups in \texttt{magma}.
\end{enumerate}

\smallskip

Let $G$ be a transitive permutation group having domain $\Omega$ and suppose that the derangement graph $\Gamma_{G,\Omega}$ of $G$ in its action on $\Omega$ has no cliques of size $4$. If $\Gamma_{G,\Omega}$ has no triangles, then the result follows from~\cite{KRS}, indeed, part~\eqref{thrm:maineq1} is satisfied with $|\Omega|\le 2$. Therefore, we may suppose that $\Gamma_{G,\Omega}$ has at least one triangle. In particular, $|\Omega|>2$. Our aim is to prove that $|\Omega|\le 30$. Thus, the satisfaction of part~\eqref{thrm:maineq1}, or~\eqref{thrm:maineq2}, or~\eqref{thrm:maineq3}, or~\eqref{thrm:maineq4} will immediately follow.

In the proof of Theorem~\ref{thrm:main}, we argue by induction on $|G|$. Specifically, by substituting $G$ with a proper transitive subgroup  if necessary, we may assume that 
\begin{equation}\label{assumption1}
G \textrm{ is minimally transitive},
\end{equation}
meaning that every proper subgroup of $G$ acts intransitively on $\Omega$.\footnote{Of course, after determining these minimally transitive groups, we need to establish which over groups lack cliques of size $3$, to complete the proof of Theorem~\ref{thrm:main}.}

The objective of this section  is to establish this preliminary result towards proving Theorem~\ref{thrm:main}.
\begin{lemma}\label{l:step1}
The group $G$ possesses a system of imprimitivity $\Sigma_1$ with $|\Sigma_1|=3$. The permutation group induced by $G$ on $\Sigma_1$ is $\mathrm{Alt}(\Sigma_1)$ and $G_{(\Sigma_1)}$ has no derangements. See Figure~$\ref{figure1}$ for some information on the subgroup lattice of $G$.
\end{lemma}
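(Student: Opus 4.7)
My plan splits the lemma into three claims: (a) existence of a block system $\Sigma_1$ of size $3$, (b) the induced action $\bar G$ on $\Sigma_1$ equals $\mathrm{Alt}(\Sigma_1)$, and (c) $G_{(\Sigma_1)}$ contains no derangements on $\Omega$. I would establish (c) first, then handle (a) and (b) together by induction on $|G|$.

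For (c): assume $|\Sigma_1|=3$ and $\mathrm{Alt}(\Sigma_1)\subseteq\bar G$. The three elements of $\mathrm{Alt}(\Sigma_1)$ form a triangle in $\Gamma_{\bar G,\Sigma_1}$; lift them to $g_1,g_2,g_3\in G$, obtaining a triangle in $\Gamma_{G,\Sigma_1}$ and hence, by Lemma~\ref{l:prel1}, in $\Gamma_{G,\Omega}$. Suppose for contradiction $d\in G_{(\Sigma_1)}$ is a derangement on $\Omega$. I claim $\{g_1,g_2,g_3,dg_1\}$ is a $4$-clique in $\Gamma_{G,\Omega}$: the four elements are distinct because $d\ne 1$ and, for $i\in\{2,3\}$, $g_ig_1^{-1}\notin G_{(\Sigma_1)}$ whereas $d\in G_{(\Sigma_1)}$; the difference $dg_1\cdot g_1^{-1}=d$ is a derangement by assumption; for $i\in\{2,3\}$, the product $dg_1g_i^{-1}$ acts on $\Sigma_1$ exactly as $g_1g_i^{-1}$ does (since $d$ is trivial on $\Sigma_1$), so is a derangement on $\Sigma_1$ and, by Lemma~\ref{l:prel1}, on $\Omega$. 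This $K_4$ contradicts the standing hypothesis.

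For (a) and (b), I would argue by induction on $|G|$. If $G$ is imprimitive, choose a block system $\Sigma$ with $|\Sigma|>1$ such that the induced action $\bar G:=G/G_{(\Sigma)}$ on $\Sigma$ is primitive (take $\Sigma$ corresponding to a maximal subgroup of $G$ properly containing $G_\omega$). Lemma~\ref{l:prel1} yields $\omega(\Gamma_{\bar G,\Sigma})\le 3$. When $G_{(\Sigma)}\ne 1$, $|\bar G|<|G|$ and the induction applies to $\bar G$ on $\Sigma$: either $\bar G$ admits a $K_4$ in its derangement graph (which lifts to a $K_4$ in $\Gamma_{G,\Omega}$, a contradiction), or $(\bar G,\Sigma)$ belongs to the exceptional list of Theorem~\ref{thrm:main}. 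Since the exceptional groups $\mathrm{Alt}(4)$ on $6$ points and the groups of degree $18$ or $30$ are all imprimitive, the primitivity of $\bar G$ leaves only case~(1), giving $|\Sigma|\le 3$. For $|\Sigma|=3$ set $\Sigma_1:=\Sigma$; the remaining subcases ($|\Sigma|=2$, $G_{(\Sigma)}=1$, or $G$ primitive) are handled by combining Propositions~\ref{prop:technique1} and~\ref{prop:technique2}, the O'Nan--Scott classification, and the minimal-transitivity hypothesis, which together force either $|\Omega|\le 3$ (so that $\Sigma_1=\Omega$ works trivially) or the existence of an alternative size-$3$ block system. For (b), once $|\Sigma_1|=3$, $\bar G$ is $\mathrm{Alt}(\Sigma_1)$ or $\mathrm{Sym}(\Sigma_1)$. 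In the $\mathrm{Sym}(\Sigma_1)$ case, the preimage $H$ of $\mathrm{Alt}(\Sigma_1)$ is a normal subgroup of index $2$. If $G_\omega\not\subseteq H$, an orbit-count gives $|H\cdot\omega|=|H|/|H\cap G_\omega|=|\Omega|$, so $H$ is transitive on $\Omega$, contradicting minimal transitivity; if $G_\omega\subseteq H$, then $G_\omega\subseteq G_{(\Sigma_1)}$, a strong structural constraint from which one extracts either a direct contradiction using $K_4$-freeness or an alternative size-$3$ block system with $\mathrm{Alt}(\Sigma_1)$ induced.

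The main obstacle is the existence step~(a): ruling out primitive induced actions $\bar G$ on $\Sigma$ at degrees $\ge 4$, and also handling the primitive case for $G$ itself, where the O'Nan--Scott analysis together with Propositions~\ref{prop:technique1} and~\ref{prop:technique2} is essential. The $\mathrm{Alt}(3)$-versus-$\mathrm{Sym}(3)$ identification in~(b) is a secondary subtle point that leverages the minimal-transitivity hypothesis.
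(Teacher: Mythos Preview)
Your argument for (c) is fine; the paper builds the larger clique $\{1,c,c^2,k,ck,c^2k\}$ instead, but your $4$-clique already does the job.

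The real gap is in (a). When $G_{(\Sigma)}\ne 1$ your induction on $|G|$ is legitimate and does give $|\Sigma|\le 3$. But when $G_{(\Sigma)}=1$ (which includes the case that $G$ is primitive on $\Omega$), induction does not apply since $|\bar G|=|G|$, and here you invoke Propositions~\ref{prop:technique1} and~\ref{prop:technique2}. Those propositions \emph{require} $G_{(\Sigma)}\ne 1$ among their hypotheses and only conclude that $G_{(\Sigma)}$ contains a derangement; they say nothing about the degree of a primitive action and cannot be used here. What is actually needed is the external result \cite[Theorem~1.2]{FPS}: a primitive group whose derangement graph has a triangle but no $K_4$ has degree $3$. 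The paper's proof of the lemma is built around precisely this citation (together with~\cite{KRS} for the triangle-free case). Reproving it via an O'Nan--Scott analysis is the content of~\cite{FPS}, not a routine step.

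The subcase $|\Sigma|=2$ is also not handled: Propositions~\ref{prop:technique1} and~\ref{prop:technique2} do not manufacture ``an alternative size-$3$ block system''. The paper disposes of this case directly: every element of $G\setminus G_{(\Sigma_1)}$ is a derangement, so either $G_{(\Sigma_1)}$ contains a derangement (giving a $K_4$ exactly as in your argument for (c)) or $\Gamma_{G,\Omega}$ is bipartite, contradicting the standing assumption that it contains a triangle.

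Finally, (b) contains an error. With $H$ the preimage of $\mathrm{Alt}(\Sigma_1)$ one has $|G:H|=2$ and $|H:G_{(\Sigma_1)}|=3$, so $G_\omega\subseteq H$ does \emph{not} imply $G_\omega\subseteq G_{(\Sigma_1)}$. The correct dichotomy, used in the paper, is: either $G_\omega\nleq H$, whence $G=G_\omega H$ and $H$ is transitive on $\Omega$, contradicting~\eqref{assumption1}; or $G_\omega\le H$, whence $H$ determines a block system of size $|G:H|=2$, contradicting the minimality $|\Sigma_1|=3$. Your choice of $\Sigma$ (primitive quotient) is in fact the minimal one, so this minimality argument is available to you---you just did not use it.
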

\begin{proof}
Let $\Sigma_1$ be a system of imprimitivity for the action of $G$ on $\Omega$ with $|\Sigma_1|>1$ and having minimal cardinality. In particular, $G$ acts primitively on $\Sigma_1$. 
Consider the natural homomorphism
$$\bar{\,}:G\to \mathrm{Sym}(\Sigma_1)$$ arising from the action of $G$ on $\Sigma_1$. Specifically, $G_{(\Sigma_1)}$ is the kernel of this mapping.

Since the derangement graph of $G$ on $\Omega$ lacks cliques of size $4$, by Lemma~\ref{l:prel1}, $\Gamma_{\bar G,\Sigma_1}$ also lacks such cliques.

Now, suppose the derangement graph $\Gamma_{\bar{G},\Sigma_1}$ has no triangles. As $|\Sigma_1|>1$, according to~\cite{KRS}, we have $|\bar{G}|=|\Sigma_1|=2$. Consequently, $[G:G_{(\Sigma_1)}]=2$ and $\bar{G}\cong\mathrm{Sym}(2)$. Notice that each element in $G\setminus G_{(\Sigma_1)}$ acts as a derangement on $\Sigma_1$\footnote{This follows from the fact that, if $g\in G\setminus G_{(\Sigma_1)}$, then $g$ swaps the two blocks in the system of imprimitivity $\Sigma_1$ and hence $g$ fixes no point of $\Omega$.} and thus on $\Omega$ as well.

Consider $g\in G\setminus G_{(\Sigma_1)}$. If $G_{(\Sigma_1)}$ contains a derangement $k$ in its action on $\Omega$, then $gk$ and $gkg^{-1}$ are both derangements since $gk\in G\setminus G_{(\Sigma_1)}$ and $gkg^{-1}$ is conjugate to $k$. Consequently, 
$$C=\{1,g,k,gk\},$$
 forms a clique of size four in $\Gamma_{G,\Omega}$, contradicting the assumption that $\Gamma_{G,\Omega}$ has no $4$-cliques. Hence, $G_{(\Sigma_1)}$, in its action on $\Omega$, contains no derangements. Consequently, the derangement graph $\Gamma_{G,\Omega}$ is bipartite with bipartition $$G_{(\Sigma_1)},G\setminus G_{(\Sigma_1)}.$$
  However, this contradicts the fact that $\Gamma_{G,\Omega}$ admits a triangle.

This contradiction demonstrates that the derangement graph $\Gamma_{\bar{G},\Sigma_1}$ of $\bar G$ has a triangle but no $4$-clique. Since $\bar{G}$ is primitive, by~\cite[Theorem~1.2]{FPS}, we conclude that $|\Sigma_1|=3$ and $\mathrm{Alt}(\Sigma_1)\le \bar G\le \mathrm{Sym}(\Sigma_1)$.

Now, let $G'$ be the preimage under $\bar{\,}$ of $\mathrm{Alt}(\Sigma_1)$. We have $[G:G']=1$ when $\bar G=\mathrm{Alt}(\Sigma_1)$ and $[G:G']=2$ when $\bar G=\mathrm{Sym}(\Sigma_1)$. Since $G'$ acts transitively on $\Sigma_1$, it follows that $G'$ is also transitive on $\Omega$\footnote{This implication is clear when $G=G'$ and it requires a moment's thought when $|G:G'|=2.$ Indeed, suppose $|G:G'|=2$. If $G_{\omega_1}\le G'$, then from the bijection between systems of imprimitivity of $G$ on $\Omega$ and the interval lattice of $G/G_{\omega_1}$ we deduce that $G$ has a system of imprimitivity of size $2$, because $|G:G'|=2$ and $G_{\omega_1}\le G'$. However, this contradicts the fact that we have shown that the minimal cardinality of a system of imprimitivity for $G$ is $3$. 
 Therefore, $G_{\omega_1}\nleq G'$. Since $|G:G'|=2$, we have $G=G_{\omega_1}G'$.  We deduce by the Frattini argument that $G'$ is transitive on $\Omega$.} Therefore, by~\eqref{assumption1}, we conclude that $G=G'$, implying
\begin{equation*}
\bar G=\mathrm{Alt}(\Sigma_1).
\end{equation*}

Suppose $G_{(\Sigma_1)}$ contains a derangement $k$ in its action on $\Omega$. Now, let $c\in G$ such that $\bar{c}$ acts as a cycle of length $3$ on $\Sigma_1$, i.e., $\bar G=\mathrm{Alt}(\Sigma_1)=\langle\bar{c}\rangle$. Following the previous argument, we observe that 
$$C=\{1,c,c^2,k,ck,c^2k\}$$ forms a clique of size six in $\Gamma_{G,\Omega}$, contradicting the assumption that $\Gamma_{G,\Omega}$ has no $4$-cliques. Therefore, $G_{(\Sigma_1)}$ in its action on $\Omega$ contains no derangement.
\end{proof} 

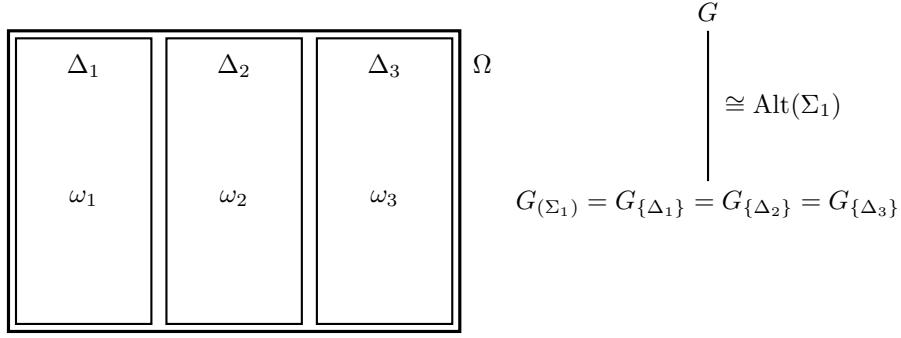
\begin{figure}
\begin{tikzpicture}
\draw[very thick] (-3,-2) rectangle (3,2);
\draw[thick] (-2.9,-1.9)  rectangle (-1.1,1.9);
\draw[thick] (-0.9,-1.9)  rectangle (.9,1.9);
\draw[thick] (1.1,-1.9)  rectangle (2.9,1.9);
\node[below] at (-2,0){$\omega_1$};
\node[below] at (0,0){$\omega_2$};
\node[below] at (2,0){$\omega_3$};
\node [below] at (-2,1.8){$\Delta_1$};
\node [below] at (0,1.8){$\Delta_2$};
\node [below] at (2,1.8){$\Delta_3$};
\node [below] at (3.3,1.8){$\Omega$};
\node[above] at (6.3,2){$G$};
\node[below] at (6.3,0){$G_{(\Sigma_1)}=G_{\{\Delta_1\}}=G_{\{\Delta_2\}}=G_{\{\Delta_3\}}$};
\node at (7.3,1){$\cong\mathrm{Alt}(\Sigma_1)$};
\draw[thick] (6.3,2)--(6.3,0);
\end{tikzpicture}
\caption{System of imprimitivity $\Sigma_1$.}\label{figure1}
\end{figure}
\begin{notation}\label{notationstep1}
{\rm 
We establish some necessary notation for the subsequent steps of the proof of Theorem~\ref{thrm:main}. From Lemma~\ref{l:step1}, we know that $G_{(\Sigma_1)}$ has no derangements in its action on $\Omega$, thus
\begin{equation}\label{eq:mainequation}
G_{(\Sigma_1)}=\bigcup_{\omega\in \Omega}G_\omega.
\end{equation}

Let
\begin{equation*}\Sigma_1=\{\Delta_1,\Delta_2,\Delta_3\}
\end{equation*}
and choose
\begin{equation*}
\omega_i\in \Delta_i,\, \text{for all }i\in \{1,2,3\}.
\end{equation*}

Since $G$ is imprimitive on $\Omega$ with a system of imprimitivity $\Sigma_1$, and $G$ induces $\mathrm{Alt}(\Sigma_1)$ on $\Sigma_1$ by Lemma~\ref{l:step1}, we embed
$$G\le H\mathrm{wr}\mathrm{Alt}(3),$$
where $H\le \mathrm{Sym}(\Delta)$, $H$ is transitive and $\Delta$ is a finite set with $|\Delta|=|\Delta_i|$. Under this identification, we may write
$$\Delta_1=\Delta\times\{1\},\, \Delta_2=\Delta\times\{2\},\, \Delta_3=\Delta\times\{3\}$$
and regard $G$ as endowed with its imprimitive action on $\Omega=\Delta\times\{1,2,3\}$. Specifically,
$$G_{(\Sigma_1)}\le H\times H\times H.$$
For each $i\in \{1,2,3\}$, let $\pi_i:G_{(\Sigma_1)}\to H$ be the projection onto the $i^{\mathrm{th}}$ direct factor. Without loss of generality (by potentially replacing $H$ with the permutation group induced by $G_{(\Sigma_1)}$ on its action on $\Delta_i$), we may assume that 
$$H=\pi_1(G_{(\Delta_1)})=\pi_2(G_{(\Sigma_1)})=\pi_3(G_{(\Sigma_1)}).$$
This allows a concrete representation of the elements of $G$; indeed, each element of $G$ takes the form $(h_1,h_2,h_3)(1\,2\,3)^i$, for some $h_1,h_2,h_3\in H$ and $i\in \{0,1,2\}$.

Let $c\in G\setminus G_{(\Sigma_1)}$ with $\Delta_1^c=\Delta_2$. Then $c=(h_1,h_2,h_3)(1\,2\,3)$, for some $h_1,h_2,h_3\in H$, and $G=G_{(\Sigma_1)}\langle c\rangle$. Let $h=h_1h_2h_3$. Note that
\begin{align*}
(1,h_1^{-1},h_2^{-1}h_1^{-1})^{-1}c(1,h_1^{-1},h_2^{-1}h_1^{-1})&=
(1,h_1,h_1h_2)(h_1,h_2,h_3)(1\,2\,3)(1,h_1^{-1},h_2^{-1}h_1)\\
&=(h_1,h_1h_2,h_1h_2h_3)(h_1^{-1},h_2^{-1}h_1^{-1},1)(1\,2\,3)\\
&=(1,1,h)(1\,2\,3).
\end{align*}
This shows that, by replacing $G$ with its conjugate $G^{(1,h_1^{-1},h_2^{-1}h_1^{-1})}$ in $H\mathrm{wr}\mathrm{Alt}(3)$, we may assume that
\begin{equation}\label{eq:c}
c=(1,1,h)(1\,2\,3).
\end{equation} Of course, this substitution does not affect the fact that $\Gamma_{G,\Omega}$ has no $4$-clique.

Notice that if $G$ is primitive on $\Omega$, then $\Sigma_1=\Omega$, implying that part~\eqref{thrm:maineq1} of Theorem~\ref{thrm:main} holds with $|\Omega|=3$. Hence, for the remainder of the proof of Theorem~\ref{thrm:main}, we assume that $G$ is imprimitive on $\Omega$, meaning $|\Omega|>3$.
}
\end{notation}

\section{Step 2}\label{sec:step2}In this section, we adopt Notation~\ref{notationstep1}. The objective of this section is to establish partial results towards the proof of Theorem~\ref{thrm:main}.
\begin{lemma}\label{l:step2}
The group $G$ admits a system of imprimitivity $\Sigma_2$, which is a refinement of $\Sigma_1$, such that the permutation group induced by $G$ on $\Sigma_2$ is permutation isomorphic to the action of $\mathrm{Alt}(4)$  on the six $2$-subsets of $\{1,2,3,4\}$. See Figure~$\ref{figure3}$ for some information on the subgroup lattice of $G$.
\end{lemma}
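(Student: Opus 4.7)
The plan is to construct $\Sigma_2$ as a maximal proper refinement of $\Sigma_1$ and then pin down the induced action $K:=G^{\Sigma_2}$. Since $|\Omega|>3=|\Sigma_1|$, a proper refinement of $\Sigma_1$ exists (namely $\Omega$ itself); I choose $\Sigma_2$ so that no system of imprimitivity of $G$ lies strictly between $\Sigma_1$ and $\Sigma_2$. Equivalently, $G_{(\Sigma_1)}$ acts primitively on the set of $m$ blocks of $\Sigma_2$ contained in a given $\Delta_i\in\Sigma_1$, and $|\Sigma_2|=3m$ for some $m\ge 2$. By Lemma~\ref{l:prel1}, $\Gamma_{K,\Sigma_2}$ inherits the no-$4$-clique property from $\Gamma_{G,\Omega}$, and by Lemma~\ref{l:step1}, $G_{(\Sigma_1)}$ has no derangements on $\Omega$, hence none on $\Sigma_2$, so its image in $K$ is likewise derangement-free.

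The first main step would be to show $m=2$. I would view $G$ as acting on $\Sigma_2$ with block system $\Sigma_1$, and apply Propositions~\ref{prop:technique1} and~\ref{prop:technique2} taking ``$\Delta$'' to be the $m$-element set of $\Sigma_2$-blocks inside $\Delta_1$. The analogue of ``$G_{(\Sigma)}$'' is nontrivial, for otherwise transitivity of $G_{(\Sigma_1)}$ on $\Delta_1$ would force $m=1$. If ``$G_{(\Delta)}$'' is trivial, Proposition~\ref{prop:technique1} produces a derangement in $G_{(\Sigma_1)}$ on $\Sigma_2$, hence on $\Omega$ by Lemma~\ref{l:prel1}, contradicting Lemma~\ref{l:step1}. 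If ``$G_{(\Delta)}$'' is nontrivial and the induced primitive action on the $m$ blocks has non-abelian socle, Proposition~\ref{prop:technique2} gives either the same contradiction or an explicit hypergraph structure whose chromatic-number lower bound forces a socle with at least five simple factors, which in turn makes $|\Sigma_2|$ too large to be compatible with the no-$4$-clique condition (via a Step-$1$-style bound). The residual HA-type branch is handled by a direct derangement construction inside the wreath embedding $G\le H\wr\mathrm{Alt}(3)$ of Notation~\ref{notationstep1}. All branches rule out $m\ge 3$, leaving $m=2$ and $|\Sigma_2|=6$.

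The second main step is to identify $K$. With $m=2$, the image $K_{(\Sigma_1)}$ of $G_{(\Sigma_1)}$ in $K$ embeds into $\mathrm{Sym}(2)^3\cong(C_2)^3$. Using the $c$-conjugation of Notation~\ref{notationstep1}, where $c=(1,1,h)(1\,2\,3)$ cycles the three coordinates, $K_{(\Sigma_1)}$ must be a $3$-cycle-invariant subgroup of $(C_2)^3$: the trivial group, the diagonal $C_2$, the anti-diagonal $V_4$ (coordinates summing to $0$), or all of $(C_2)^3$. The first two contradict transitivity of $G$ on $\Sigma_2$ (the two $\Sigma_2$-blocks inside each $\Delta_i$ must be swapped by some element of $K_{(\Sigma_1)}$ nontrivial on the $i$-th coordinate, for every $i$); the full $(C_2)^3$ case is ruled out because independently flipping coordinates in $K_{(\Sigma_1)}$ together with the triangle $\{1,c,c^2\}\subseteq\Gamma_{G,\Omega}$ would assemble into a $4$-clique. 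Hence $K_{(\Sigma_1)}\cong V_4$, and $K$ is the unique degree-$6$ extension $V_4\rtimes\mathrm{Alt}(3)\cong\mathrm{Alt}(4)$, acting on the six $2$-subsets of $\{1,2,3,4\}$ (the only faithful transitive $\mathrm{Alt}(4)$-action of degree $6$).

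The main obstacle I foresee is the $m=2$ reduction: carefully covering all O'Nan-Scott types for the induced primitive action on the $m$ blocks, and in each case lifting a derangement from the $m$-set inside $\Delta_1$ to a derangement in $G_{(\Sigma_1)}$ on all of $\Sigma_2$ via the wreath coordinates of Notation~\ref{notationstep1}. The HA-type branch is the most delicate, as it falls outside the scope of Propositions~\ref{prop:technique1}--\ref{prop:technique2} and must be handled by an explicit construction tailored to the abelian regular socle.
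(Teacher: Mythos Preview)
Your overall plan is viable, but it is \emph{not} the route the paper takes, and a couple of your steps are either under-specified or mis-argued.

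\textbf{How the paper actually argues.} After reducing (as you do) to $\Sigma_2=\Omega$ and using Proposition~\ref{prop:technique1} to get $G_{(\Delta_1)}\ne 1$, the paper explicitly declines to invoke Proposition~\ref{prop:technique2}. Instead it gives a uniform argument covering all socle types at once: first it shows $G_{(\Delta_i\cup\Delta_j)}=1$ for all $i\ne j$, then, in the embedding $G\le H\wr\mathrm{Alt}(3)$, it reads off that $G_{(\Delta_1)}\le 1\times A\times B$ with $A,B$ transitive on $\Delta$, and the vanishing commutator $[G_{(\Delta_1)},G_{(\Delta_2)}]=1$ forces $A$ and $B$ to be \emph{commuting regular} subgroups of $\mathrm{Sym}(\Delta)$. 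Lemma~\ref{centralizers2} then produces a derangement in $G_{(\Sigma_1)}$ unless $|\Delta|\le 2$, giving $m=2$ directly. So the paper's ``HA-type branch'' is not a special case at all; the commuting-regular-subgroups trick is the whole engine.

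\textbf{Where your plan needs repair.} Your strategy is essentially the template of Lemmas~\ref{l:step3} and the final step, transplanted one level up. That does work here, but your write-up misstates the mechanism in two places. In the non-abelian branch, Proposition~\ref{prop:technique2} yields an immediate contradiction for a much simpler reason than you give: part~\eqref{technique2:nr1} forces $\ell\ge 5$, while part~\eqref{technique2:nr3} makes $G/G_{(\Sigma_1)}\cong\mathrm{Alt}(3)$ act transitively on those $\ell$ factors, impossible since $|\mathrm{Alt}(3)|=3$. No chromatic-number or size-of-$\Sigma_2$ estimate is needed. In the abelian branch you only promise a ``direct derangement construction'', which is not a plan; the argument that actually fits your template is the covering count from the proof of Lemma~\ref{l:step3}: a minimal normal $V\le G_{(\Sigma_1)}$ satisfies $V=\bigcup_{\omega}V_\omega$ with at most $|\Sigma_1|=3$ distinct terms, and a group that is a union of three proper subgroups has each of index $2$, so $m=p^a=2$.

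\textbf{A concrete error in Step~2 of your identification.} The diagonal $C_2=\{(0,0,0),(1,1,1)\}$ is \emph{not} excluded by transitivity: with it, $K$ has order $6$ and acts regularly on $\Sigma_2$. It is excluded by the fact, which you yourself recorded, that $K_{(\Sigma_1)}$ must be derangement-free on $\Sigma_2$; the element $(1,1,1)$ is a derangement. The same observation (rather than a $4$-clique assembly) also disposes of the full $(C_2)^3$ case, leaving only the even-weight $V_4$ and hence $K\cong\mathrm{Alt}(4)$ as claimed.
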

\begin{proof}
From Notation~\ref{notationstep1}, we have $\bar G=\mathrm{Alt}(\Sigma_1)$ and hence $G_{\{\Delta_i\}}=G_{(\Sigma_1)}$, because $\mathrm{Alt}(\Sigma_1)$ acts regularly on $\Sigma_1$. As $G_{\{\Delta_i\}}$ is transitive on $\Delta_i$, we deduce that 
\begin{align*}\label{eq:transitive}
G_{(\Sigma_1)}\hbox{ is transitive on }\Delta_i,\, \hbox{ for each }i\in \{1,2,3\}.
\end{align*}

Since $|\Omega|>|\Sigma_1|=3$, $G$ admits a system of imprimitivity $\Sigma_2\ne\Sigma_1$ which is a refinement of $\Sigma_1$.\footnote{Observe that, when $G_{\{\Delta_1\}}$ is primitive on $\Delta_1$, $\Sigma_2$ is the trivial system of imprimitivity consisting of singletons of $\Omega$.} Among all such systems of imprimitivity choose one so that $|\Sigma_2|$ is as small as possible. Just as in the proof of Lemma~\ref{l:step1}, we may replace $G$ with the permutation group induced by $G$ on $\Sigma_2$ and hence we may suppose that $\Sigma_2=\{\{\omega\}\mid\omega\in \Omega\}$. Therefore, we need to show that the action of $G$ on $\Omega$ itself is permutation isomorphic to the action of $\mathrm{Alt}(4)$  on the six $2$-subsets of $\{1,2,3,4\}$.

The minimality of $|\Sigma_2|$ implies that $G_{\{\Delta_1\}}=G_{(\Sigma_1)}$ is primitive in its action on $\Delta_1$. If $G_{(\Delta_1)}= 1$, then the hypotheses of Proposition~\ref{prop:technique1} are satisfied and hence $G_{(\Sigma_1)}$ contains a derangement in its action on $\Omega$, contradicting~\eqref{eq:mainequation} (or the conclusion of Lemma~\ref{l:step1}). This contradiction implies that 
\begin{equation*}
G_{(\Delta_1)}\ne 1,
\end{equation*} that is, $G_{\{\Delta_1\}}$ does not act faithfully on $\Delta_1$. Although we are in a position to apply Proposition~\ref{prop:technique2}, we choose not to do so here. Instead, we argue using an ad-hoc method, as we believe this approach could be applied more generally. However, at present, we do not know how to achieve this broader generalization.

Since $G_{(\Sigma_1)}$ is primitive on $\Delta_i$, we deduce that $G_{\omega_i}$ is a maximal subgroup of $G_{(\Sigma_1)}$. 

We claim that
\begin{equation}\label{eq:G}
G_{(\Delta_1\cup\Delta_2)}=G_{(\Delta_1\cup \Delta_3)}=G_{(\Delta_2\cup\Delta_3)}=1. 
\footnote{Observe that, since $G$ acts transitively by conjugation on $\Sigma_1=\{\Delta_1,\Delta_2,\Delta_3\}$, the groups 
$G_{(\Delta_1\cup \Delta_2)}$, $G_{(\Delta_1\cup \Delta_3)}$ and $G_{(\Delta_2\cup \Delta_3)}$ are $G$-conjugate. Hence, if one of them is trivial, all must be trivial.
}
\end{equation}
We argue by contradiction and we suppose that $G_{(\Delta_1\cup\Delta_2)}\ne 1$. Observe that 
$$G_{(\Delta_1)}\cap G_{(\Delta_2)}\cap G_{(\Delta_3)}=G_{(\Delta_1\cup\Delta_2\cup\Delta_3)}=G_{(\Omega)}=1.$$
Since $G_{(\Delta_1\cup \Delta_2)}\unlhd G_{(\Sigma_1)}$ and since $G_{\omega_3}$ is a maximal subgroup of $G_{(\Sigma_1)}$, we deduce that either $G_{(\Sigma_1)}=G_{\omega_3}G_{(\Delta_1\cup\Delta_2)}$ or $G_{(\Delta_1\cup\Delta_2)}\le G_{\omega_3}$. If $G_{(\Delta_1\cup\Delta_2)}\le G_{\omega_3}$, then, using the fact that $G_{(\Delta_1\cup\Delta_2)}\unlhd G_{(\Sigma_1)}$ and that $G_{(\Sigma_1)}$ is transitive on $\Delta_3$, we get 
$$G_{(\Delta_1\cup\Delta_2)}\le \bigcap_{k\in G_{(\Sigma_1)}}G_{\omega_3}^k=\bigcap_{\omega\in \Delta_3}G_{\omega}=G_{(\Delta_3)}.$$
Thus $G_{(\Delta_1\cup\Delta_2)}=G_{(\Delta_1\cup\Delta_2\cup\Delta_3)}=1$, which contradicts the fact that we are assuming $G_{(\Delta_1\cup\Delta_2)}\ne 1$.  Therefore, $$G_{(\Sigma_1)}=G_{\omega_3}G_{(\Delta_1\cup\Delta_2)}.$$
Since $G_{(\Sigma_1)}$ is transitive on $\Delta_3$, the Frattini argument implies that $G_{(\Delta_1\cup\Delta_2)}$ is transitive on $\Delta_3$. In particular, from Jordan's theorem, there exists $k_3\in G_{(\Delta_1\cup\Delta_2)}$ such that $k_3$ is a derangement on $\Delta_3$ and fixes each point in $\Delta_1\cup\Delta_2$. With a similar argument, we obtain $k_1\in G_{(\Delta_2\cup\Delta_3)}$ acting as a derangement on $\Delta_1$ and $k_2\in G_{(\Delta_1\cup\Delta_3)}$ acting as a derangement on $\Delta_2$. The support of the permutation $k_i$ is exactly $\Delta_i$ and hence $k=k_1k_2k_3\in G_{(\Sigma_1)}$ is a permutation having support exactly $\Delta_1\cup\Delta_2\cup\Delta_3=\Omega$. Therefore, $k$ is a derangement of $\Omega$ belonging to $G_{(\Sigma_1)}$, which contradicts~\eqref{eq:mainequation} (or the conclusion of Lemma~\ref{l:step1}). This contradiction has established the veracity of~\eqref{eq:G}.

Now, we use the embedding $G\le H\mathrm{wr}\mathrm{Alt}(3)$, see Notation~\ref{notationstep1}. 
Let 
\begin{align*}
A&:=\pi_2(G_{(\Delta_1)})\le \mathrm{Sym}(\Delta), \\
B&:=\pi_3(G_{(\Delta_1)})\le \mathrm{Sym}(\Delta).
\end{align*} Thus 
\begin{equation}\label{type1}
G_{(\Delta_1)}\le 1\times A\times B.
\end{equation}
Using the maximality of $G_{\omega_2}$ and $G_{\omega_3}$ in $G_{(\Sigma_1)}$, we get $G_{(\Sigma_1)}=G_{\omega_2}G_{(\Delta_1)}=G_{\omega_3}G_{(\Delta_1)}$.  Thus, from the Frattini argument, we deduce that $G_{(\Delta_1)}$ is transitive on  $\Delta_2$ and on $\Delta_3$, that is,
 $A$ and $B$ are transitive subgroups of $\Sym(\Delta)$.

Conjugating $G_{(\Delta_1)}$ by $c$\footnote{Recall $c=(1,1,h)(1\,2\,3)$, see Notation~\ref{notationstep1}.}, we deduce
\begin{align*}
G_{(\Delta_2)}&=G_{(\Delta_1)}^c\le (1\times A\times B)^c=(1\times A\times B)^{(1,1,h)(1\,2\,3)}\\
&=(1\times A\times B^h)^{(1\,2\,3)}=B^h\times 1\times A.\nonumber
\end{align*}
In fact, this computation shows that
\begin{align}\label{type2}
\pi_1(G_{(\Delta_2)})&=B^h \hbox{ and } \pi_3(G_{(\Delta_2)})=A. 
\end{align}

As $G_{(\Delta_1)},G_{(\Delta_2)}\unlhd G_{(\Sigma_1)}$, we have $$[G_{(\Delta_1)},G_{(\Delta_2)}]\le G_{(\Delta_1)}\cap G_{(\Delta_2)}=G_{(\Delta_1\cup\Delta_2)}=1,$$
where in the last equality we have used~\eqref{eq:G}. Hence $G_{(\Delta_1)}$ commutes with $G_{(\Delta_2)}$. From~\eqref{type1} and~\eqref{type2}, we deduce that $[A,B]=1$. As $A$ and $B$ are both transitive subgroups of $\mathrm{Sym}(\Delta)$, from Lemma~\ref{centralizes} we get $B={\bf C}_{\mathrm{Sym}(\Delta)}(A)$ and $A={\bf C}_{\mathrm{Sym}(\Delta)}(B)$.\footnote{We have $A=B$ if and only if $A$ is abelian and, in this case, $A$ acts regularly on $\Delta$. If $A\ne B$, then $A$ and $B$ both act regularly on $\Delta$ and we may identify the action of $A$ on $\Delta$ via its right regular representation and the action of $B$ on $\Delta$ via the left regular representation of $A$ itself.} 

Conjugating by $c$ again, we deduce
\begin{equation*}G_{(\Delta_3)}=G_{(\Delta_2)}^c\le (B^h\times 1\times A)^c=A^h\times B^h\times 1.
\end{equation*}
As above, this computation shows that
\begin{align}\label{type3}
\pi_1(G_{(\Delta_3)})&=A^h \hbox{ and } \pi_2(G_{(\Delta_2)})=B^h. 
\end{align}

Arguing as above, we see that $G_{(\Delta_3)}$ commutes with $G_{(\Delta_1)}$ and with $G_{(\Delta_2)}$. From~\eqref{type1} and~\eqref{type3}, we deduce that $[A,B^h]=1$. As $A$ and $B^h$ are both transitive subgroups of $\mathrm{Sym}(\Delta)$, from Lemma~\ref{centralizes} we get $$B^h={\bf C}_{\mathrm{Sym}(\Delta)}(A)=B.$$ Thus $h$ normalizes $B$. As $A={\bf C}_{\mathrm{Sym}(\Delta)}(B)$, we get $$A^h={\bf C}_{\mathrm{Sym}(\Delta)}(B^h)={\bf C}_{\mathrm{Sym}(\Delta)}(B)=A.$$ Thus $h$ normalizes $A$. Now, from~\eqref{type1},~\eqref{type2} and~\eqref{type3}, we have
\begin{align}\label{combined}
G_{(\Delta_1)}&\le 1\times A\times B,\\\nonumber
G_{(\Delta_2)}&\le B\times 1\times A,\\\nonumber
G_{(\Delta_3)}&\le A\times B\times 1,\nonumber
\end{align}
where $B={\bf C}_{\Sym(\Delta)}(A)$ and $A$ is a regular subgroup of $\mathrm{Sym}(\Delta)$.

Now, we use~\eqref{eq:G} combined with~\eqref{combined}. Indeed, from~\eqref{eq:G} and from the first line in~\eqref{combined}, we deduce that, for every $a\in A$, there exists a unique $b\in B$ with $(1,a,b)\in G_{(\Delta_1)}$. In particular, the mapping $\varphi: A \to B$, characterized by the property that $(1, a, a^\varphi) \in G_{(\Delta_1)}$ for every $a \in A$, defines a group isomorphism from $A$ to $B$.
Similarly,~\eqref{eq:G} and the remaining two lines of~\eqref{combined} imply that there exist two more group isomorphisms $\psi,\eta:A\to B$ with
\begin{align*}
G_{(\Delta_1)}&=\{(1,a,a^\varphi)\mid a\in A\},\\
G_{(\Delta_2)}&=\{(a^\psi,1,a)\mid a\in A\},\\
G_{(\Delta_3)}&=\{(a,a^\eta,1)\mid a\in A\}.
\end{align*}

We are now ready to conclude the proof of this result. Since $A$ and $B$ are commuting regular subgroups of $\operatorname{Sym}(\Delta)$, we may apply Lemma~\ref{centralizers2}. Assume there exist $a \in A \setminus \{1\}$ and $b \in B \setminus \{1\}$ such that $ab$ is a derangement (on $\Delta$). Then
$$
\underbrace{(a^\psi,1,a)}_{\in G_{(\Delta_2)}} \underbrace{(1,b^{\varphi^{-1}},b)}_{\in G_{(\Delta_1)}} = (a^\psi, b^{\varphi^{-1}}, ab) \in G_{(\Sigma_1)}.
$$
The first two coordinates of this element lie in regular subgroups and are different from the identity since $\varphi$ and $\psi$ are group isomorphisms. Therefore, each coordinate acts as a derangement on its corresponding block, and we deduce that $(a^\psi, b^{\varphi^{-1}}, ab)$ is a derangement on $\Omega$, which contradicts~\eqref{eq:mainequation}. This contradiction shows that only the other case of Lemma~\ref{centralizers2} can occur, namely $|\Delta| = 2$ and $|\Omega| = |\Sigma_1||\Delta| = 6$.\footnote{The fact that $G$ is permutation-isomorphic to $\operatorname{Alt}(4)$ in its action on the six 2-subsets of $\{1,2,3,4\}$ follows easily.}
\end{proof}
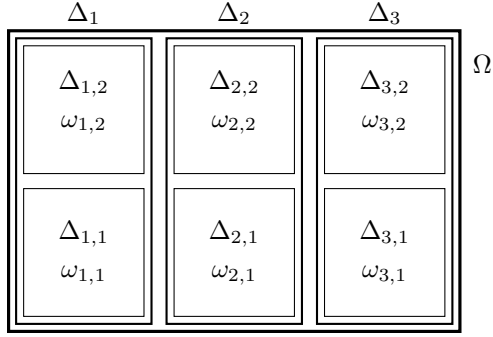
\begin{figure}
\begin{tikzpicture}
\draw[very thick] (-3,-2) rectangle (3,2);
\draw[thick] (-2.9,-1.9)  rectangle (-1.1,1.9);
\draw[thick] (-0.9,-1.9)  rectangle (.9,1.9);
\draw[thick] (1.1,-1.9)  rectangle (2.9,1.9);
\node[below] at (-2,-1){$\omega_{1,1}$};
\node[below] at (0,-1){$\omega_{2,1}$};
\node[below] at (2,-1){$\omega_{3,1}$};
\node[below] at (-2,1){$\omega_{1,2}$};
\node[below] at (0,1){$\omega_{2,2}$};
\node[below] at (2,1){$\omega_{3,2}$};

\node[above] at (2,1){$\Delta_{3,2}$};
\node[above] at (0,1){$\Delta_{2,2}$};
\node[above] at (-2,1){$\Delta_{1,2}$};

\node[above] at (2,-1){$\Delta_{3,1}$};
\node[above] at (0,-1){$\Delta_{2,1}$};
\node[above] at (-2,-1){$\Delta_{1,1}$};

\node [below] at (-2,2.5){$\Delta_1$};
\node [below] at (0,2.5){$\Delta_2$};
\node [below] at (2,2.5){$\Delta_3$};
\node [below] at (3.3,1.8){$\Omega$};
\draw (-2.8,-1.8) rectangle (-1.2,-.1);
\draw (-2.8,.1) rectangle (-1.2,1.8);

\draw (-.8,-1.8) rectangle (.8,-.1);
\draw (-.8,.1) rectangle (.8,1.8);

\draw (1.2,-1.8) rectangle (2.8,-.1);
\draw (1.2,.1) rectangle (2.8,1.8);
\end{tikzpicture}
\caption{Systems of imprimitivity $\Sigma_1$ and $\Sigma_2$.}\label{figure2}
\end{figure}
\begin{figure}
\begin{tikzpicture}
\node[above] at (0,2){$G$};
\node at (0,1){$G_{(\Sigma_1)}=G_{\{\Delta_1\}}=G_{\{\Delta_2\}}=G_{\{\Delta_3\}}$};
\node[right] at (0,1.5){$\cong\mathrm{Alt}(\Sigma_1)$};
\draw[thick] (0,2)--(0,1.2);
\node at (0,-1){$G_{\{\Delta_{2,1}\}}=G_{\{\Delta_{2,2}\}}$};
\draw[thick](0,.8)--(0,-.8);
\node at (-3,-1){$G_{\{\Delta_{1,1}\}}=G_{\{\Delta_{1,2}\}}$};
\draw[thick](0,.8)--(-3,-.8);
\node at (3,-1){$G_{\{\Delta_{3,1}\}}=G_{\{\Delta_{3,2}\}}$};
\draw[thick](0,.8)--(3,-.8);
\node at (0,-3){$G_{(\Sigma_2)}$};
\draw[thick](0,-1.2)--(0,-2.8);
\draw[thick](3,-1.2)--(0,-2.8);
\draw[thick](-3,-1.2)--(0,-2.8);
\draw    (4,2) to[out=-45,in=45] (4,-3);
\node at (6,-.5){$\cong\mathrm{Alt(4)}$};
\end{tikzpicture}
\caption{Some subgroups of $G$}\label{figure3}
\end{figure}
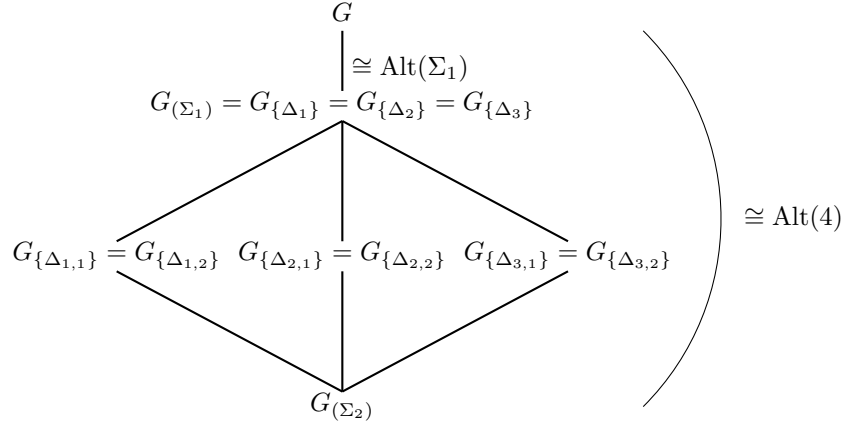

\begin{notation}\label{notationstep2}
{\rm Building upon Notation~\ref{notationstep1} and utilizing Lemma~\ref{l:step2}, we introduce additional notation crucial for the subsequent steps of the proof.

 We let $\Sigma_2$ be a system of imprimitivity of $G$ on $\Omega$ having cardinality $6$, which is a refinement of $\Sigma_1$. We let
$$\Sigma_2=\{\Delta_{1,1},\Delta_{1,2},\Delta_{2,1},\Delta_{2,2},\Delta_{3,1},\Delta_{3,2}\},$$
where $$\Delta_i=\Delta_{i,1}\cup\Delta_{i,2},\,\hbox{ for all }i\in \{1,2,3\}.$$
Moreover, we choose $\omega_{i,j}\in \Delta_{i,j}$, for all $i\in \{1,2,3\}$ and $j\in \{1,2\}$.

Using the system of imprimitivity $\Sigma_2$, we obtain an embedding
$$G\le K\mathrm{wr}\mathrm{Alt}(4),$$
where $K\le\mathrm{Sym}(\Delta)$, $\Delta$ is a finite set\footnote{In Notation~\ref{notationstep1}, the symbol $\Delta$ had a different meaning. However, since we no longer utilize the set $\Delta$ as defined in Notation~\ref{notationstep1}, we re-purpose the symbol $\Delta$ here to avoid introducing additional cumbersome notation.} with $|\Delta|=|\Delta_{i,j}|$ and $K$ is the permutation group induced by the action of $G_{\{\Delta_{i,j}\}}$ on $\Delta_{i,j}$. Under this identification, we may write 
\begin{align*}
\Delta_{1,1}&=\Delta\times\{1\}, \Delta_{1,2}=\Delta\times\{2\},\\
 \Delta_{2,1}&=\Delta\times\{3\}, \Delta_{2,2}=\Delta\times\{4\},\\ 
\Delta_{3,1}&=\Delta\times\{5\}, \Delta_{3,2}=\Delta\times\{6\}
\end{align*}
 and we may think about $G$ as endowed of its imprimitive action on $\Omega=\Delta\times\{1,2,3,4,5,6\}$. In particular, 
$$G_{(\Sigma_2)}\le K\times K\times K\times K\times K\times K.$$

For each $i\in \{1,2,3\}$ and $j\in \{1,2\}$, let $\pi_{i,j}:G_{(\Sigma_2)}\to K$ be the projection obtained by restricting a permutation of $G_{(\Sigma_2)}$ to $\Delta_{i,j}$. Since the union of the $G$-conjugates of $G_{\omega_{1,1}}$ is $G_{(\Sigma_1)}$ by~\eqref{eq:mainequation}, we have $G_{\omega_{1,1}}\nleq G_{(\Sigma_2)}$. As $[G_{\{\Delta_{1,1}\}}:G_{(\Sigma_2)}]=2$ and $G_{\omega_{1,1}}\le G_{\{\Delta_{1,1}\}}$, we deduce
\begin{equation*}
G_{\{\Delta_{1,1}\}}=G_{\omega_{1,1}}G_{(\Sigma_2)}.
\end{equation*}
Now, the Frattini argument implies that $G_{(\Sigma_2)}$ is transitive on $\Delta_{1,1}$. As $G_{(\Sigma_2)}\unlhd G$, we get that $G_{(\Sigma_2)}$ is transitive on $\Delta_{i,j}$, for every $i\in 
\{1,2,3\}$ and $j\in \{1,2\}$.

We let
 $$L=\pi_{i,j}(G_{(\Sigma_2)}),$$
 observe that $L$ does not depend on $i,j$ and, from the paragraph above, $L$ is a transitive subgroup of $K$ with $[K:L]\le 2$.\footnote{We have $[K:L]=2$ when $G_{(\Delta_{1,1})}\le G_{(\Sigma_2)}$, whereas $[K:L]=1$ when 
 $G_{(\Delta_{1,1})}\nleq G_{(\Sigma_2)}$.}

As $[G_{\{\Delta_i\}}:G_{\{\Delta_{i,j}\}}]=2$, we get\footnote{Some thought is actually needed to see this. Indeed, as $[G_{\{\Delta_i\}}:G_{\{\Delta_{i,j}\}}]=2$, we have either $G_{\{\Delta_i\}}=G_{\{\Delta_{i,j}\}}G_\omega$ or $G_\omega\le G_{\{\Delta_{i,j}\}}$. In the latter case, if we let $\Delta_{i',j'}$ be the block of $\Sigma_2$ containing $\omega$, we have $G_\omega\le G_{\{\Delta_{i,j}\}}\cap G_{\{\Delta_{i',j'}\}}=G_{(\Sigma_2)}$, see Figure~\ref{figure2}. However, this contradicts the fact that $G_{(\Sigma_2)}$ is transitive on each block of $\Sigma_2$.}
\begin{equation}\label{eq:stabilizers}
G_{\{\Delta_i\}}=G_{\{\Delta_{i,j}\}}G_\omega,\,\forall i\in \{1,2,3\}, j\in \{1,2\}, \omega\in \Omega\setminus\Delta_i.
\end{equation}

The embedding of $G$ in $K\mathrm{wr}\mathrm{Alt}(4)$ allows a concrete representation of the elements of $G$; indeed, each element of $G$ is of the form $(k_1,k_2,k_3,k_4,k_5,k_6)\sigma$, for some $k_1,k_2,k_3,k_4,k_5,k_6\in K$ and $$\sigma\in \langle (1\,3\,5)(2\,4\,6),(1\,2)(3\,4)\rangle\cong\mathrm{Alt}(4).$$ 
In particular, the element $c$ defined in Notation~\ref{notationstep1} can now be written as 
\begin{equation}\label{newc}
c=(1,1,1,1,k_1,k_2)(1\,3\,5)(2\,4\,6).\footnote{There is a slight abuse here. Indeed, the element $c=(1,1,h)(1\,2\,3)\in H\mathrm{wr}\mathrm{Alt}(3)$ viewed as an element of $K\mathrm{wr}\mathrm{Alt}(4)$ is $(1,1,1,1,k_1,k_2)(1\,3\,5)(2\,4\,6)$, for some $k_1,k_2\in K$.}
\end{equation}

Observe that, 
if $G_{(\Sigma_1)}$ is primitive on $\Delta_i$, then $\Sigma_2=\Omega$ and hence we deduce that part~\eqref{thrm:maineq2} of Theorem~\ref{thrm:main} is satisfied.\footnote{Strictly speaking,
to establish part~\eqref{thrm:maineq2} of Theorem~\ref{thrm:main}, it is necessary to omit the assumption in~\eqref{assumption1}. This is deduced through computation: the derangement graph of every proper overgroup of $\mathrm{Alt}(4)$ in its degree 6 action contains a clique of size $4$.
} Therefore, in the rest of the proof of Theorem~\ref{thrm:main} we may assume that $G_{(\Sigma_1)}$ is imprimitive on $\Delta_i$, that is, $|\Omega|>6=|\Sigma_2|$.

It is quite remarkable that Lemma~\ref{l:step2} has already shown that $|\Omega|$ is a multiple of $6$.}
\end{notation}

\section{Step 3}\label{sec:step3}
In this section, we utilize Notation~\ref{notationstep2}. The objective of this section is to establish a partial result towards proving Theorem~\ref{thrm:main}. 
\begin{lemma}\label{l:step3}
The group $G$ admits a system of imprimitivity $\Sigma_3$, which is a refinement of $\Sigma_2$, such that the permutation group induced by $G$ on $\Sigma_3$ is permutation equivalent to the group $(18,142)$, $(30,126)$ or $(30,233)$ in the library of transitive groups in \texttt{magma}.
\end{lemma}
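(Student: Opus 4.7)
The plan is to mimic Lemmas~\ref{l:step1} and~\ref{l:step2}: choose a system of imprimitivity $\Sigma_3$ for $G$ on $\Omega$ that properly refines $\Sigma_2$ (which exists since $|\Omega|>6$ by Notation~\ref{notationstep2}) with $|\Sigma_3|$ minimum. The minimality forces $G_{\{\Delta_{i,j}\}}$ to act primitively on the set of $\Sigma_3$-blocks inside each $\Delta_{i,j}$. Passing to the induced quotient $\bar G=G/G_{(\Sigma_3)}$ preserves the essential properties: by (the contrapositive of) Lemma~\ref{l:prel1} the derangement graph of $\bar G$ on $\Sigma_3$ has no $4$-clique, and every element of the image $\bar G_{(\bar\Sigma_1)}$ of $G_{(\Sigma_1)}$ fixes some block of $\Sigma_3$, since its preimage in $G_{(\Sigma_1)}$ already fixes a point of $\Omega$ by Lemma~\ref{l:step1}. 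It therefore suffices to identify the possible $\bar G$.

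The first step is to show $\bar G_{(\bar\Delta_{i,j})}\ne 1$. Note that $\bar G_{(\bar\Sigma_2)}\ne 1$: otherwise $\bar G\hookrightarrow\mathrm{Sym}(\bar\Sigma_2)$ would force $\bar G\cong\mathrm{Alt}(4)$; transitivity on $\Sigma_3$, together with $6\mid|\Sigma_3|\mid 12$ and $|\Sigma_3|>6$, would give $|\Sigma_3|=12$ with $\bar G$ acting regularly, whose derangement graph is the complete graph on $12$ vertices and trivially contains a $4$-clique. With $\bar G_{(\bar\Sigma_2)}\ne 1$ at hand, if additionally $\bar G_{(\bar\Delta_{i,j})}=1$ then Proposition~\ref{prop:technique1} supplies a derangement in $\bar G_{(\bar\Sigma_2)}\le\bar G_{(\bar\Sigma_1)}$, contradicting the derangement-free property of $\bar G_{(\bar\Sigma_1)}$.

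Next I would case-split on the socle of the primitive group induced by $\bar G_{\{\bar\Delta_{i,j}\}}$ on $\bar\Delta_{i,j}$. If it is non-abelian, Proposition~\ref{prop:technique2} applies: the socle $N$ of $\bar G_{(\bar\Sigma_2)}$ decomposes as $S_1\times\cdots\times S_\ell$ with $\ell\ge 5$, with $\bar G/\bar G_{(\bar\Sigma_2)}=\mathrm{Alt}(4)$ acting transitively on $\{S_1,\dots,S_\ell\}$. Since $\ell$ divides $|\mathrm{Alt}(4)|=12$, we have $\ell\in\{6,12\}$. Combining the chromatic number lower bound in Proposition~\ref{prop:technique2}\eqref{technique2:nr42} with Lemma~\ref{l:conjclasses} to count $\mathrm{Aut}(T^\kappa)$-classes, and checking the short list of transitive actions of $\mathrm{Alt}(4)$ on $6$ or $12$ vertices together with the ``special'' condition on the associated hypergraph, should rule out every candidate and hence the non-abelian socle case. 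If the socle is abelian, the primitive group on $\bar\Delta_{i,j}$ is affine with $|\bar\Delta_{i,j}|=p^d$ for some prime $p$, and the task reduces to bounding $p^d$.

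The main obstacle is this bound. The strategy is to exploit the embedding $\bar G\le L\,\mathrm{wr}\,\mathrm{Alt}(4)$ from Notation~\ref{notationstep2} and adapt the commuting-regular-actions argument of Lemma~\ref{l:step2}: normal subgroups of $\bar G_{(\bar\Sigma_2)}$ supported on two disjoint blocks of $\Sigma_2$ centralize one another, and unless $|\bar\Delta_{i,j}|$ is very small they can be combined with the element $c$ of~\eqref{newc} to exhibit either an explicit $4$-clique in $\Gamma_{\bar G,\Sigma_3}$ or an explicit derangement in $\bar G_{(\bar\Sigma_1)}$. A finite case analysis should leave only $p^d\in\{3,5\}$, so $|\Sigma_3|\in\{18,30\}$. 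The identification of the three groups $(18,142)$, $(30,126)$, $(30,233)$ is then a direct enumeration in the \texttt{magma} library of transitive groups of degrees $18$ and $30$, retaining those that are minimally transitive, admit the $\mathrm{Alt}(3)$ and $\mathrm{Alt}(4)$ quotients dictated by Lemmas~\ref{l:step1} and~\ref{l:step2}, and whose derangement graph has no $4$-clique.
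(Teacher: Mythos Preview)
Your reduction to a primitive action on the $\Sigma_2$-blocks, the argument that $G_{(\Sigma_2)}\ne 1$ and $G_{(\Delta_{i,j})}\ne 1$, and the elimination of the non-abelian socle case via Proposition~\ref{prop:technique2} all match the paper's proof essentially line for line.

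The divergence is in the abelian socle case. You propose to adapt the commuting-regular-subgroups machinery of Lemma~\ref{l:step2}, but you leave the decisive step (``a finite case analysis should leave only $p^d\in\{3,5\}$'') entirely unjustified; as written this is a gap, and the claimed list $\{3,5\}$ looks reverse-engineered from the answer rather than derived. The paper bypasses all of this with a two-line covering argument. Take a minimal normal subgroup $V\unlhd G$ with $V\le G_{(\Sigma_2)}$; then $V$ is elementary abelian and, being normal in the primitive affine group $G_{\{\Delta_{i,j}\}}/G_{(\Delta_{i,j})}$, acts regularly on each $\Delta_{i,j}$. From~\eqref{eq:mainequation} one has $V=\bigcup_{\omega\in\Omega}V_\omega$. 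Each $V_\omega$ is normalised by $V$ and by $G_\omega$, hence by $G_\omega V=G_{\{\Delta_{i,j}\}}$, so there are at most $[G:G_{\{\Delta_{i,j}\}}]=|\Sigma_2|=6$ distinct subgroups $V_\omega$. A group cannot be the union of $6$ proper subgroups of common index $p^a$ unless $p^a<6$, whence $|\Delta_{i,j}|\le 5$ and $|\Omega|\le 30$. A direct search through the transitive groups of degree $12,18,24,30$ then isolates the three listed groups. This replaces your proposed wreath-product analysis entirely and is what you should use.
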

\begin{proof}
Since $|\Omega| > |\Sigma_2 | = 6$, $G$ admits a system of imprimitivity $\Sigma_3\ne \Sigma_2$ which is a
refinement of $\Sigma_2$. Among all such systems of imprimitivity choose one so that $|\Sigma_3 |$
is as small as possible. Just as in the proof of Lemma~\ref{l:step1} and of Lemma~\ref{l:step2}, we may replace $G$ with the permutation group induced by $G$ on $\Sigma_3$ and hence
we may suppose that $\Sigma_3 = \{\{\omega\} \mid \omega \in \Omega\}$.

If $G_{(\Sigma_2)}=1$, then $G$ acts faithfully on $\Sigma_2$ and hence $G\cong \mathrm{Alt}(4)$, because the permutation group induced by $G$ on $\Sigma_2$ is $\mathrm{Alt}(4)$ in its degree six action. Since $12=|G|\ge |\Omega|>|\Sigma_2|=6$, we deduce $|\Omega|=12$ and $G$ acts regularly on $\Omega$, which is clearly a contradiction. Therefore 
\begin{equation}\label{todayistuesday}G_{(\Sigma_2)}\ne 1.
\end{equation}

The minimality of $|\Sigma_3|$ implies that $G_{\{\Delta_{1,1}\}}$ is primitive in its action
on $\Delta_{1,1}$. If $G_{(\Delta_{1,1})}= 1$, then the hypothesis of Proposition~\ref{prop:technique1} are satisfied and
hence $G_{(\Sigma_2)}$ contains a derangement in its action on $\Omega$, contradicting~\eqref{eq:mainequation}. This
contradiction implies that
\begin{equation}\label{foggyday}
G_{(\Delta_{i,j})}\ne 1,\,\forall i\in \{1,2,3\}, j\in \{1,2\}.
\end{equation}

Suppose that, the primitive permutation group induced by $G_{\{\Delta_{i,j}\}}$ on $\Delta_{i,j}$ has non-abelian socle. We now apply Proposition~\ref{prop:technique2}  and the notation therein. Since by~\eqref{eq:mainequation}, $G_{(\Sigma_2)}$ has no derangements, we deduce that parts~\eqref{technique2:nr1}--\eqref{technique2:nr5} of Proposition~\ref{prop:technique2} are satisfied. Recall that $G/G_{(\Sigma_2)}\cong\mathrm{Alt}(4)$ has degree $6$. It is elementary to check either by hand or with a computer that, if $\Gamma$ is an $(a,b)$-hypergraph such that $\mathrm{Alt}(4)$ is special for $\Gamma$ and such that the stabilizer of a point in $\mathrm{Alt}(4)$ (in its action of degree $6$) fixes some edge of $\Gamma$, then $\chi(\Gamma)\le 4$.

Observe that, by Burnside $p^\alpha q^\beta$ theorem, a non-abelian simple group has order divisible by at least three distinct primes and hence the non-abelian simple group $T$ appearing in the statement of Proposition~\ref{prop:technique2} has at least $4$ $\mathrm{Aut}(T)$-conjugacy classes. From Proposition~\ref{prop:technique2} part~\eqref{technique2:nr42}, Lemma~\ref{l:conjclasses} and the previous paragraph, we immediately obtain a contradiction.

Assume that the primitive permutation group induced by $G_{\{\Delta\}}$ on $\Delta$ has abelian
socle. Let $V$ be a minimal normal subgroup of $G$ with $V\le G_{(\Sigma_2)}$. Then, $V$ is an elementary abelian $p$-group, for some prime number $p$. From~\eqref{eq:mainequation}, we have 
$$V=\bigcup_{\omega\in \Omega}V_\omega.$$
Since $V_\omega$ is normalized by $V$ and by $G_\omega$, we get ${\bf N}_G(V_\omega)\ge G_\omega V=G_{\{\Delta\}}$ and hence the union above consists of at most $|\Sigma_3|$ elements. Therefore $$p^a=|\Delta|=|V:V_{\omega}|<|\Sigma_2|=6.$$
Thus $|\Omega|=p^a|\Sigma_2|\le 30$. The rest of the result follows with a computer computation using the database of transitive groups of degree at most $30$. 
\end{proof}

\section{Final step}
The objective of this section  is to establish  Theorem~\ref{thrm:main}.

\begin{lemma}\label{computer1}Let $G$ be the group $(18,142)$ or $(30,126)$ in the library of transitive groups in \texttt{magma} and let $\Gamma$ be an $(a,b)$-hypergraph $\Gamma$ such that $G$ is special for $\Gamma$ and such that the stabilizer of a point in $G$ fixes some edge of $\Gamma$.  Then $\chi(\Gamma)\le 4$.
\end{lemma}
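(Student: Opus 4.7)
The plan is a finite case analysis that, for each of the two groups $G$ (of orders $324$ and of a similar small order respectively), enumerates up to isomorphism all $(a,b)$-hypergraphs $\Gamma$ satisfying both hypotheses and then exhibits a proper $4$-colouring of each. Since the groups are small, the entire enumeration is effectively computable in \texttt{magma}, in the same spirit as the ``by hand or with a computer'' verification already carried out in Lemma~\ref{l:step3} for the degree-$6$ action of $\mathrm{Alt}(4)$.

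First I would fix a vertex $v$ of the vertex set $V$ and compute the point stabilizer $G_v$. The hypothesis that $G_v$ fixes some edge $e$ of $\Gamma$ means $G_v \le G_e$, and the $G$-edge-transitivity built into specialness forces every edge of $\Gamma$ to lie in the single orbit $e^G$. Hence $\Gamma$ is determined, up to relabelling of the base edge, by a pair $(H,e)$, where $H = G_e$ is an overgroup of $G_v$ in $G$ and $e$ is an $H$-invariant partition of some $ab$-subset of $V$ into $b$ parts of size $a$. Since $G$ has only finitely many subgroups, the number of overgroups $H$ is small.

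For each such overgroup $H$, the admissible ground sets of $e$ are unions of $H$-orbits on $V$; specialness additionally requires this ground set to be a single $H$-orbit unless $a=2$, and the partition of that ground set into parts of size $a$ must be $H$-invariant with $H$ transitive on the parts. These restrictions prune the list of candidate edges to a short one, and in each surviving case the hypergraph $\Gamma = (V, e^G)$ is completely determined. For each candidate I would then exhibit an explicit proper $4$-colouring $\eta \colon V \to \{1,2,3,4\}$. Natural colourings to try come from coarsenings of the $G$-invariant imprimitivity block structure on $V$ (both $(18,142)$ and $(30,126)$ preserve nontrivial block systems), perturbed on a single $G_e$-orbit of vertices if the naive choice happens to make all parts of some edge monochromatic. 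The verification that a given $\eta$ is proper is routine: one simply checks that for every $e' = \{p_1,\ldots,p_b\} \in e^G$ at least one part $p_i$ meets two distinct colour classes.

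The main obstacle is organisational rather than conceptual: with two groups, several overgroups of $G_v$ in each, and potentially several admissible $(a,b)$-partitions per overgroup, enumerating every admissible hypergraph by hand would be tedious and error-prone. By contrast, running through the overgroups of $G_v$, computing their orbits on $V$, forming the associated edges, generating the orbit $e^G$, and then testing candidate $4$-colourings against it, is immediate in \texttt{magma}. For this reason the cleanest presentation is as a computer verification, with the essential combinatorial content being that the specialness condition severely restricts the edge orbit structure and that each resulting hypergraph turns out to admit a $4$-colouring aligned with the imprimitivity structure of $G$.
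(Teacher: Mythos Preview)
Your approach is essentially the paper's: enumerate by computer all admissible $(a,b)$-hypergraphs and exhibit a $4$-colouring of each. The paper does exactly this, except that rather than looking for structured colourings derived from block systems it simply samples $10^6$ random maps $V\to\{1,2,3,4\}$ per hypergraph and checks that at least one is proper.

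One point to tighten: you tacitly identify the vertex set $V$ of $\Gamma$ with the degree-$18$ or degree-$30$ set on which $G$ is given as a permutation group, and then conflate ``point stabilizer'' with ``vertex stabilizer''. The lemma does not assume this. In the application via Proposition~\ref{prop:technique2} the vertex set is $\{S_1,\ldots,S_\ell\}$ with $G/G_{(\Sigma)}$ acting by conjugation, while ``the stabilizer of a point in $G$'' means the stabilizer $G_\omega$ in the given degree-$18$ or degree-$30$ action; these two actions need not coincide. So the overgroups $H=G_e$ you should list are the overgroups of $G_\omega$ (not of a vertex stabilizer on $V$), and your enumeration must also range over the finitely many transitive $G$-sets that can serve as $V$. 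With that adjustment your plan goes through and matches the paper's.
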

\begin{proof}
This follows with a computer computation. The computation is elementary. Given $G$ as above, we have constructed all $(a,b)$-hypergraphs $\Gamma$ such that $G$ is special for $\Gamma$. Then, for each $(a,b)$-hypergraph $\Gamma=(V,E)$ we have randomly constructed $10^6$ maps $\eta:V\to \{1,2,3,4\}$ and we have checked whether this map is a colouring of $\Gamma$. In all cases, we found such a map. Incidentally, these $(a,b)$-hypergraphs might have a smaller chromatic number.
\end{proof}
\begin{theorem}[{{[See, Theorems~1 and~2 in~\cite{bereczky}]}}]\label{bere}
Let $p$ be an odd prime number and $a\ge 1$. If $p+1<b<3(p+1)/2$ or if $b=p+1$ and $a\ge 2$, then every transitive permutation group of degree $p^a\cdot b$ contains a derangement of $p$-power order.
\end{theorem}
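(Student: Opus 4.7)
The plan is to reason via Sylow subgroups. Let $G$ be a transitive permutation group on $\Omega$ with $|\Omega|=p^{a}b$, and let $P$ be a Sylow $p$-subgroup of $G$; since $p^{a}$ divides $|G|$, the group $P$ is nontrivial. Suppose for a contradiction that $G$ contains no derangement of $p$-power order, i.e., every $p$-element of $G$ fixes at least one point. Let $O_{1},\ldots,O_{k}$ be the $P$-orbits on $\Omega$, of sizes $|O_{i}|=p^{a_{i}}$, and let $H_{i}\le P$ denote the stabilizer in $P$ of a chosen point of $O_{i}$. The standing hypothesis then reads
\[
P=\bigcup_{i=1}^{k}\bigcup_{g\in P}H_{i}^{g}, \qquad \sum_{i=1}^{k}p^{a_{i}}=p^{a}b.
\]

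The main tool is a covering estimate inside $P$. Classically, in a $p$-group $Q$ the union of all conjugates of a proper subgroup $H$ satisfies $|\bigcup_{g\in Q}H^{g}|<|Q|$ (because normalizers grow inside $p$-groups), so any $P$-orbit of size greater than one produces $p$-elements that are derangements on that orbit and must therefore be absorbed by point stabilizers coming from \emph{other} orbits. Combining the individual estimates over $i$ and using the identity $\sum_{i}[P:H_{i}]=p^{a}b$, the multiplicity with which $P$ is covered is controlled by $k$ and the distribution of the $a_{i}$'s, while the orbit-sum identity constrains these data to a narrow range dictated by $b$. The strict inequality $b<3(p+1)/2$ should cap the possible coverage from above, while $b>p+1$ should prevent the orbit sizes from being uniformly small, and together these should yield a numerical contradiction. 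In the borderline case $b=p+1$ the margin disappears, and the strategy switches: one passes to a $P$-invariant system of imprimitivity with $p+1$ blocks and uses the extra depth $a\ge 2$ to lift a $p$-power derangement inductively from the quotient action of $P$ on blocks.

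The main obstacle will be controlling overlaps in the union $\bigcup_{i,g}H_{i}^{g}$: the naive bound ignores intersections among distinct $H_{i}^{g}$, and these can be sizable when $P$ has large nilpotency class or when several orbits share a common kernel. I would expect Bereczky's actual argument to proceed by induction on $a$, splitting cleanly into the two regimes of the statement, and to invoke structural results on minimal transitive $p$-groups to reduce the possible configurations of $(a_{1},\dots,a_{k})$ to a finite list. A further delicate point is that the arithmetic condition on $b$ is tight, so the argument must exploit the strict inequalities in a way that degenerates exactly at the excluded boundary values; pinpointing where this degeneration occurs, and checking that the case $b=p+1$, $a=1$ genuinely admits transitive groups without $p$-power derangements (so the hypothesis $a\ge 2$ is essential), looks like the subtlest portion of the casework.
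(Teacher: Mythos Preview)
The paper does not prove this theorem; it is stated as a citation of Bereczky's results (Theorems~1 and~2 in~\cite{bereczky}) and used as a black box in the final step of the argument. There is therefore no ``paper's own proof'' to compare against.

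As for your proposal itself, it is not a proof but an outline with explicitly acknowledged gaps. You set up the Sylow subgroup $P$, its orbits, and the covering condition $P=\bigcup_{i,g}H_i^{g}$ correctly, and you are right that the crude bound $|\bigcup_{g}H^{g}|<|P|$ for a single proper subgroup is insufficient once several orbits and their stabilizer conjugates overlap. But from that point on you are speculating: you write that ``the strict inequality $b<3(p+1)/2$ should cap the possible coverage'' and that you ``would expect Bereczky's actual argument to proceed by induction on $a$'', without carrying out either the counting or the induction. The passage on the borderline case $b=p+1$ likewise just names a strategy (pass to a block system with $p+1$ blocks and lift) without executing it. So what you have is a plausible roadmap, not a proof; the genuine work---the sharp covering estimate that distinguishes $b<3(p+1)/2$ from $b\ge 3(p+1)/2$, and the inductive lift when $b=p+1$---is precisely what is missing. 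If you want to see how it is actually done, you will need to consult Bereczky's paper directly.
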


\begin{proof}[Proof Theorem~$\ref{thrm:main}$]
We use the same notation that we have established throughout Sections~\ref{sec:step1},~\ref{sec:step2} and~\ref{sec:step3}. 

Let $\Sigma_4$ be a system of imprimitivity for $G$, which is a refinement of $\Sigma_3$. Just as in the proof of Lemmas~\ref{l:step1},~\ref{l:step2} and~\ref{l:step3}, we may replace $G$ with the permutation group induced by $G$ on $\Sigma_4$ and hence we may suppose that $\Sigma_4 =\{\{\omega\} | \omega \in \Omega\}$.

Suppose first $G_{(\Sigma_3)}=1$. Then $G$, as an abstract group, is isomorphic to the permutation group induced by $G$ on $\Sigma_3$ and hence it is isomorphic to the transitive group $(18,142)$ or $(30,126)$ in the library of transitive groups in \texttt{magma}~\cite{magma}. We have checked with a computer that these two permutation groups of degree $18$ and $30$ do not admit a transitive action on a set $|\Omega|>|\Sigma_3|$ such that the derangement graph has no $4$-cliques. Therefore $G_{(\Sigma_3)}\ne 1$. Let $\Delta\in \Sigma_3$.

Suppose $G_{(\Delta)}=1$.  Then, by Proposition~\ref{prop:technique1}, $G_{(\Sigma_3)}$ admits a derangement for its action on $\Omega$, contradicting~\eqref{eq:mainequation}. Therefore, $G_{(\Delta)}\ne 1$. 

Assume that the primitive permutation group induced by $G_{\{\Delta\}}$ on $\Delta$ has non-abelian
socle. We now apply Proposition~\ref{prop:technique2} and the notation therein. Since by~\eqref{eq:mainequation}, $G_{(\Sigma_3)}$ has no derangements, we deduce that parts~\eqref{technique2:nr1}--\eqref{technique2:nr5} of Proposition~\ref{prop:technique2} are satisfied. Observe that, by Burnside $p^\alpha q^\beta$ theorem, a non-abelian simple group has order divisible by at least three distinct primes and hence the non-abelian simple group $T$ appearing in the statement of Proposition~\ref{prop:technique2} has at least $4$ $\mathrm{Aut}(T)$-conjugacy classes. From Proposition~\ref{prop:technique2} part~\eqref{technique2:nr42}, Lemma~\ref{l:conjclasses} and Lemma~\ref{computer1}, we immediately obtain a contradiction.

Assume that the primitive permutation group induced by $G_{\{\Delta\}}$ on $\Delta$ has abelian
socle. Let $V$ be a minimal normal subgroup of $G$ with $V\le G_{(\Sigma_3)}$. Then, $V$ is an elementary abelian $p$-group, for some prime number $p$. From~\eqref{eq:mainequation}, we have 
$$V=\bigcup_{\omega\in \Omega}V_\omega.$$
Since $V_\omega$ is normalized by $V$ and by $G_\omega$, we get ${\bf N}_G(V_\omega)\ge G_\omega V=G_{\{\Delta\}}$ and hence the union above consists of at most $|\Sigma_3|$ elements. Therefore $$p^a=|\Delta|=|V:V_{\omega}|<|\Sigma_3|.$$ Observe that, since $|G:G_{(\Sigma_1)}|=3$ and $G_{(\Sigma_1)}\unlhd G$, if $p\ne 3$, then every $p$-element of $G$ is contained in $G_{(\Sigma_1)}$. Hence $G$ has no derangements of $p$-power order with $p\ne 3$ by~\eqref{eq:mainequation}. Therefore, using Theorem~\ref{bere}, we get
\begin{itemize}
\item $|\Sigma_3|=18$ and $|\Delta|\in \{2,3,4,5,7,8,9,11,16,17\}$, or
\item $|\Sigma_3|=30$ and $|\Delta|\in \{2,3,4,7,8,9,11,13,16,17,19,27,29\}$.
\end{itemize}
All of these cases have been tested with a computer and no counterexample arises. The main tool used in these computations is the built-in function \texttt{MaximalSubgroups}  in \texttt{magma}, with ad-hoc arguments for some of the cases.
\end{proof}

\thebibliography{30}


\bibitem{bereczky}\`{A}.~Bereczky, Fixed-point-free $p$-elements in transitive permutation groups, 
\textit{Bull. London Math. Soc.} \textbf{27} (1995), 447--452.

\bibitem{magma} W.~Bosma, J.~Cannon, C.~Playoust, 
The Magma algebra system. I. The user language, 
\textit{J. Symbolic Comput.} \textbf{24} (3-4) (1997), 235--265.





\bibitem{BSW}D.~Bubboloni, P.~Spiga, Th.~Weigel, \textit{Normal $2$-coverings of the finite simple groups and their generalizations}, Springer Lecture Notes in Mathematics, vol. 2352, Springer, 2024.




\bibitem{Peter}P.~J.~Cameron, \textit{Permutation groups}, London Mathematical Society, Student Texts \textbf{45}, Cambridge University Press, Cambridge, 1999.

\bibitem{6}P.~J.~Cameron, C.~Y.~Ku, Intersecting families of permutations, \textit{European J. Combin.} \textbf{24}
(2003), 881--890.


  
  
\bibitem{dixon}J.~D.~Dixon, B.~Mortimer, \textit{Permutation {G}roups}, Graduate Texts in Mathematics \textbf{163}, Springer-Verlag, New York, 1996.

\bibitem{erdos1961intersection}
P.~Erd\H{o}s, C.~Ko, R.~Rado, Intersection theorems for systems of finite sets, \textit{The Quarterly Journal of Mathematics} \textbf{12} (1961), 313--320.

\bibitem{FPS}M.~Fusari, A.~Previtali, P.~Spiga, Cliques in derangement graphs for innately transitive groups, \textit{J. Group Theory} \textbf{27} (2024), 929--965.


\bibitem{GMP}M.~Giudici, L.~Morgan, C.~E.~Praeger, Prime power coverings of groups, \textit{arxiv.org/abs/2412.15543}.

\bibitem{GodsilMeagher}C.~Godsil, K.~Meagher, \textit{Erd\H{o}s-Ko-Rado Theorems: Algebraic Approaches}, Cambridge studies in advanced mathematics \textbf{149}, Cambridge University Press,  2016.







		\bibitem{Jehne77} 
		W.~Jehne, \emph{Kronecker classes of algebraic number fields}, J. Number Theory \textbf{9} (1977), 279--320.



\bibitem{notebook}E.~I.~Khukhro, V.~D.~Mazurov, Unsolved Problems in Group Theory. The Kourovka Notebook,  arXiv:1401.0300v31 [math.GR]. 

\bibitem{Klingen78}
		N.~Klingen, \emph{Zahlk\"{o}rper mit gleicher Primzerlegung}, J. Reine Angew. Math. \textbf{299} (1978), 342--384.

\bibitem{Klingen98} 
		N.~Klingen, \emph{Arithmetical Similarities}, Oxford Math. Monogr., Clarendon Press, Oxford University Press, 1998.

\bibitem{10}B.~Larose, C.~Malvenuto, Stable sets of maximal size in Kneser-type graphs, \textit{European J.
Combin. }\textbf{25} (2004), 657--673.

\bibitem{li2020ekr}
C.~H.~Li, S.~J.~Song, V.~Raghu~Tej Pantangi, Erd\H{o}s-{K}o-{R}ado problems for permutation groups,
\textit{arXiv preprint arXiv:2006.10339}, 2020.


\bibitem{LPSLPS}M.~W.~Liebeck, C.~E.~Praeger, J.~Saxl, On  the
O'Nan-Scott theorem for finite primitive permutation groups,
\textit{J. Australian Math. Soc. (A)} \textbf{44} (1988), 389--396

 
 \bibitem{lovaz}L.~Lov\'asz, J.~Pelik\'an, K.~Vesztergombi, \textit{Discrete Mathematics, Elementary and Beyond}, Undergraduate Texts in Mathematics, Springer, 1999.

\bibitem{KRS}K.~Meagher, A.~S.~Razafimahatratra, P.~Spiga, On triangles in derangement graphs, \textit{J. Comb. Theory Ser. A} \textbf{180} (2021),  Paper No. 105390. 





\bibitem{Praeger}C.~E.~Praeger, Kronecker classes of fields extensions of small degree, \textit{J. Austral. Math. Soc.} \textbf{50} (1991), 297--315.


\bibitem{Praeger88}
		C.~E. Praeger, \emph{Covering subgroups of groups and Kronecker classes of fields}, J. Algebra \textbf{118} (1988), 455--463.

\bibitem{Praeger94} 
		C.~E. Praeger, \emph{Kronecker classes of fields and covering subgroups of finite groups}, J. Austral. Math. Soc. \textbf{57} (1994), 17--34.

\bibitem{19}C.~E.~Praeger, Finite quasiprimitive graphs, in: \textit{Surveys in Combinatorics}, London
Math. Soc. Lecture Note Ser. 241, Cambridge University, Cambridge (1997), 65--85.

\bibitem{PrSc}C.~E.~Praeger, C.~Schneider, \textit{Permutation groups and cartesian decompositions}, London Mathematical Society Lecture Notes Series 449,
Cambridge University Press, Cambridge, 2018.

\bibitem{Saxl}J.~Saxl, On a Question of W. Jehne concerning covering subgroups of groups and Kronecker
classes of fields, \textit{J. London Math. Soc. (2)} \textbf{38} (1988), 243--249.



\bibitem{spiga}P.~Spiga, The Erd\H{o}s-Ko-Rado theorem for the derangement graph of the projective general linear group acting on the projective space, \textit{J. Combin. Theory Ser. A} \textbf{166} (2019), 59--90. 




\end{document}